\newtheorem{theorem}{Theorem}[section]
\newtheorem{lemma}[theorem]{Lemma}
\newtheorem{proposition}[theorem]{Proposition}
\newtheorem{corollary}[theorem]{Corollary}
\theoremstyle{definition}
\newtheorem{definition}[theorem]{Definition}
\newtheorem{remark}[theorem]{Remark}
\numberwithin{equation}{section}
\newcommand{\gras}[1]{{\mathbb #1}} 
\newcommand{\Z}{\gras{Z}}
\newcommand{\C}{\gras{C}}  
\newcommand{\bP}{\gras{P}}
\def\elem(#1,#2){  \{ \frac{#1}  {\overline {\ #2\ }} \} }
\title{On the smoothings of non-normal isolated surface singularities}
\author{Patrick Popescu-Pampu}
   \address{Universit{\'e} Lille 1, UFR de Maths., B\^atiment M2\\
     Cit\'e Scientifique, 59655, Villeneuve d'Ascq Cedex, France.}
   \email{patrick.popescu@math.univ-lille1.fr}
\subjclass[2000]{14B07 (primary), 32S55}
\keywords{Milnor fibers, Milnor fillable manifolds, non-normal singularities, 
   ruled surfaces, Stein fillings, symplectic fillings, 
   smoothings, simple elliptic singularities.}
\begin{document}

\begin{abstract}
    We show that isolated surface singularities which are non-normal may have Milnor fibers 
    which are non-diffeomorphic to those of their normalizations. Therefore, 
    non-normal isolated singularities enrich the collection of Stein fillings 
    of links of normal isolated singularities. We conclude with 
    a list of open questions related to this theme. 
\end{abstract}

This paper was published in \emph{Proceedings of Singularities in Geometry and 
Applications III}, Edinburgh, Scotland, 2013.  J.-P. Brasselet, P. Giblin, V. Goryunov eds. 
Journal of Singularities {\bf 12} (2015), 164-179. http://www.journalofsing.org/volume12/article12.html
\bigskip

\maketitle

\tableofcontents

\section{Introduction}  
\label{sec:intro}

Let $(S,0)$ be a germ of irreducible complex analytic space with isolated 
singularity. Varchenko \cite{V 80} proved that there is a well-defined 
isomorphism class of contact structures on its link (or \emph{boundary}, as we prefer to 
call it in this paper). Following the terminology introduced in \cite{CNP 06}, we 
say that a contact manifold which appears in this way is \emph{Milnor fillable}. 
We use the same name if we forget the contact structure: namely, an oriented odd-dimensional 
manifold is Milnor-fillable if and only if it is orientation-preserving diffeomorphic 
to the boundary of an isolated singularity. 

If $(S,0)$ is \emph{smoothable}, that is, if there exist deformations of it with smooth generic 
fibers, then there exist representatives of such fibers -- the so-called \emph{Milnor fibers} of the 
deformation -- which are 
Stein fillings of the contact boundary of the singularity. Milnor fibers associated to arbitrary 
smoothings were mainly studied 
till now for \emph{normal} surface singularities. When they are rational homology balls, 
they are used for the operation of \emph{rational blow-down} introduced by 
Fintushel and Stern \cite{FS 97} and generalized by Stipsicz, Szab\'o, Wahl \cite{SSW 08}. 
Due to the efforts of several researchers, the \emph{normal} surface singularities which 
have smoothings whose Milnor fibers are rational homology balls are now completely classified. 
See \cite{PSS 13} and \cite{BS 13} for details on this direction of research. 

In another direction, there are results which classify \emph{all} the possible Stein fillings 
(independently of their homology) up to 
diffeomorphisms,  for special kinds of singularities: Ohta and Ono did this for 
simple elliptic singularities \cite{OO 03} and simple singularities \cite{OO 05}, 
Lisca for cyclic quotient singularities \cite{L 08}, Bhupal and Ono \cite{BO 12} for the remaining  
quotient surface singularities. 

If $(S,0)$ is fixed, the existence of a holomorphic versal deformation, proved by Grauert 
\cite{G 72}, shows that, up to diffeomorphisms, there is only a finite number of Stein fillings of its contact boundary which appear as Milnor fibers of its smoothings. For all the previous 
classes of singularities, there is also a finite number of Stein fillings and even of 
strong symplectic fillings. This fact is not general. Ohta and Ono \cite{OO 08} 
showed that there exist Milnor fillable contact $3$-manifolds which admit an infinite 
number of minimal strong symplectic fillings, pairwise not homotopy equivalent. Later, 
Akhmedov and Ozbagci 
\cite{AO 12} proved that  there exist Milnor fillable contact $3$-manifolds 
which admit even an infinite number of Stein fillings pairwise non-diffeomorphic, 
but homeomorphic. Moreover, by varying the 
contact $3$-manifold, the fundamental groups of such fillings 
exhaust all finitely presented groups. For details on this direction of research, 
one may consult Ozbagci's survey \cite{O 14}. 

For simple singularities (see \cite{OO 05}) and for cyclic quotients (see \cite{NP 10}),  
all Stein fillings are diffeomorphic to the Milnor fibers 
of the smoothings of a singularity with the given contact link (in each case there is only one such 
singularity, up to isomorphisms). 
By contrast, for \emph{simple elliptic singularities}, there exist Stein fillings of their contact boundary 
which are not diffeomorphic to a Milnor fiber, but to the total space of their minimal resolution. 

For instance, in the case of those simple elliptic singularities which are 
not smoothable (which means, by a theorem of Pinkham \cite{P 74}, that the exceptional 
divisor of the minimal resolution is an elliptic curve with self-intersection $\leq -10$),  
there is only one Stein filling, which is diffeomorphic to the total space of the  
minimal resolution. 

We explain here (see Section \ref{sec:sell}), 
that \emph{this total space is diffeomorphic to the Milnor fiber of a smoothing 
of a \emph{non-normal} isolated surface singularity, whose normalization is the given 
non-smoothable simple elliptic singularity}. We do this by using the simplest technique of 
construction of smoothings, which was called ``\emph{sweeping out the cone by hyperplane 
sections}'' by Pinkham \cite{P 74}. 
This has the advantage of showing that those Milnor fibers are in fact 
diffeomorphic to affine algebraic surfaces. 

More generally, the results of Laufer \cite{L 81} 
and Bogomolov and de Oliveira \cite{BO 97} show that, for any \emph{normal} 
surface singularity $(S,0)$, there is a smoothing of an isolated surface singularity 
whose Milnor fiber is diffeomorphic to the minimal resolution of $(S,0)$ 
(see Proposition \ref{milnres}). 

We wrote this paper in order to emphasize \emph{the problem of the topological 
study of the smoothings of non-normal isolated singularities}. 
Let us mention that Jan Stevens has a manuscript \cite{S 09} which emphasizes  the 
\emph{algebraic} aspects of the deformation theory of such singularities. 

We have in mind as potential readers graduate students specializing either in singularity 
theory or in contact/symplectic topology, therefore we explain several notions and 
facts which are well-known to specialists of either field, but maybe not to both. 

Let us describe briefly the contents of the various sections. 
In Section \ref{sec:genorm} we explain basic facts about normal surface singularities, their resolutions and the classes 
of rational, minimally elliptic and simple elliptic singularities. In Section 
\ref{sec:gen}  we explain the basic 
notions about deformations needed in the sequel. 
In Section \ref{sec:sweepcone} we explain the technique of sweeping out a cone by 
hyperplane sections and the reason why one does not necessarily get 
in this way a normal singularity, even if the starting singularity is normal. 
In Section \ref{sec:sell} we continue with material about 
very ample curves on ruled surfaces, and we apply it to the construction of 
the desired smoothings.  In the last section, we list a series of open 
questions which 
we consider to be basic for the knowledge of the topology of deformations of 
\emph{isolated non-normal singularities}.

\medskip
{\bf Acknowledgements.} I benefited from conversations with Burak Ozbagci, 
   Jan Stevens and Jean-Yves Welschinger. I am grateful also to the referee for his 
   remarks and to J\'anos Koll\'ar who, after seeing the first version of this paper put 
   on ArXiv, communicated me a list of papers dealing 
   at least partially with smoothings of non-normal isolated surface singularities  
   (see Remark \ref{Kollar}). This research was partially supported 
   by the grant ANR-12-JS01-0002-01 SUSI and by Labex CEMPI (ANR-11-LABX-0007-01).

\vfill

\pagebreak

\section{Generalities on normal surface singularities}
\label{sec:genorm}

In this section we recall the basic properties and classes of normal 
surface singularities which are needed in the sequel. More detailed 
introductions to the study of normal surface singularities are contained 
in \cite{R 97}, \cite{N 99}, \cite{N 04}, \cite{W 00}, \cite{P 07}. 
\medskip

Recall first the basic definition, valid in arbitrary dimension:

\begin{definition} \label{norm}
  Let $(X,x)$ be a germ of  reduced  complex analytic space. 
  It is called {\bf normal} if and only if its local ring of holomorphic functions 
  is integrally closed in its total ring of fractions. 
\end{definition}

Normality may be characterized also in the following ways (see \cite[Page 81]{GPR 94}):

\begin{proposition}
   Let $(X,x)$ be a germ of  reduced  complex analytic space.  The following 
   statements are equivalent:
     \begin{enumerate}
         \item $(X,x)$ is normal.
         \item The singular locus $S(X)$ of $X$ is of codimension at least $2$ and any 
           holomorphic function on $X \setminus S(X)$ extends to a holomorphic 
           function on $X$.
         \item Every bounded holomorphic function on $X \setminus S(X)$ extends 
             to a holomorphic  function on $X$.
     \end{enumerate}
\end{proposition}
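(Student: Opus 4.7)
The plan is to prove the cycle $(1) \Rightarrow (2) \Rightarrow (3) \Rightarrow (1)$. The implication $(2) \Rightarrow (3)$ is immediate, since bounded holomorphic functions are in particular holomorphic. For $(1) \Rightarrow (2)$, I would invoke Serre's criterion for normality of a noetherian ring: $\mathcal{O}_{X,x}$ is integrally closed in its total ring of fractions if and only if it satisfies both $R_1$ (regular in codimension one) and $S_2$ (depth $\geq 2$ at every prime of height $\geq 2$). The $R_1$ condition yields at once that the singular locus $S(X)$ has codimension at least $2$ in $X$. The $S_2$ condition, translated analytically, asserts that $\mathrm{depth}_x \mathcal{O}_X \geq \min(2, \dim_x X)$; combined with the codimension bound, this produces, via a standard Hartogs/local-cohomology argument, the extension of arbitrary holomorphic functions from $X \setminus S(X)$ to $X$.

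For $(3) \Rightarrow (1)$, I would start from an element $f$ of the total ring of fractions of $\mathcal{O}_{X,x}$ satisfying an integrality relation
\[
f^n + a_{n-1} f^{n-1} + \cdots + a_0 = 0, \qquad a_i \in \mathcal{O}_{X,x}.
\]
Passing to a sufficiently small representative on which the $a_i$ are holomorphic, this monic equation forces a uniform bound $|f| \leq C$ on a neighborhood of $x$ (away from the pole locus of $f$). Since $X \setminus S(X)$ is a complex manifold and $f$ is a locally bounded meromorphic function there, the classical Riemann extension theorem promotes $f$ to a genuine holomorphic function on $X \setminus S(X)$, bounded near $x$. Hypothesis (3) then supplies a holomorphic extension $F$ of $f$ to the full representative. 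As $F$ and $f$ agree on the dense open set $X \setminus S(X)$, they coincide in the total ring of fractions, so $f \in \mathcal{O}_{X,x}$, proving normality.

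The main obstacle is this last implication, and specifically the step that upgrades $f$ from an abstract element of the total ring of fractions to a genuine holomorphic function on the smooth locus. The delicate point is to extract from the monic relation a bound on $|f|$ that is \emph{uniform} on a fixed neighborhood, not merely pointwise; once this is in hand, the successive appeals to the smooth Riemann extension theorem and to hypothesis (3) are routine. Note that the codimension clause appearing in (2) does not figure explicitly in hypothesis (3) but is recovered \emph{a posteriori}: normality forces, via Serre's $R_1$ condition, $\mathrm{codim}_X S(X) \geq 2$, so no contradiction arises with the codimension assumption used in (2).
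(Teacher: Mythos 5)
The paper itself offers no proof of this proposition: it is quoted from \cite[Page 81]{GPR 94} as a known characterization, so there is no internal argument to compare yours against. Judged on its own terms, your cycle $(1)\Rightarrow(2)\Rightarrow(3)\Rightarrow(1)$ is the standard argument and is correct in outline: $(2)\Rightarrow(3)$ is trivial as you say; $(1)\Rightarrow(2)$ via Serre's criterion $R_1+S_2$ together with the local-cohomology form of the second Riemann extension theorem is the usual route; and $(3)\Rightarrow(1)$ by bounding an integral element $f=g/h$ through its monic equation and then invoking Riemann extension on the manifold $X\setminus S(X)$ followed by hypothesis $(3)$ is exactly how this implication is normally done. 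Two points are worth making explicit. In $(1)\Rightarrow(2)$ you need $R_1$ and $S_2$ at \emph{every} point of a representative, not just at $x$; this is legitimate because the non-normal locus is analytically closed (so normality at $x$ propagates to a neighbourhood) and because excellence of analytic local rings identifies the algebraic non-regular locus with the analytic singular locus, which is what turns $R_1$ into the statement $\operatorname{codim}_X S(X)\geq 2$. In $(3)\Rightarrow(1)$ you should record that $h$, being a non-zerodivisor, vanishes on no irreducible component, so $\{h=0\}\cap(X\setminus S(X))$ is nowhere dense and Riemann extension applies; the uniform bound you single out as delicate is in fact immediate, since the monic relation gives $|f|\leq\max\bigl(1,\sum_i\sup_K|a_i|\bigr)$ on any compact neighbourhood $K$ where the $a_i$ are defined. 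With these remarks supplied, your proof is complete and consistent with the source the paper cites.
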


Using this proposition, it is easy to show that:

\begin{corollary}  \label{normext}
   If the reduced germ $(X,x)$ is normal, then  
   every continuous function  
        $$f : (X,x) \to (Y,y),$$  
    where $(Y,y)$ 
  is another holomorphic germ, is necessarily holomorphic whenever it is holomorphic 
  on the complement of a nowhere dense closed analytic subspace $(X',x) \subset (X,x)$. 
\end{corollary}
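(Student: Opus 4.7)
The plan is to reduce to the case where $(Y,y) = (\C, 0)$ by locally embedding $(Y,y) \hra (\C^N, 0)$ and working with the component functions $f_1, \ldots, f_N$. Since being holomorphic is a pointwise-checkable property of the components, it suffices to prove the corollary under the assumption that $f$ is a continuous $\C$-valued function which is holomorphic on $(X,x) \setminus (X',x)$.

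Once this reduction is in place, I would proceed in two extension steps, corresponding to the two kinds of ``bad'' loci that have to be crossed: the nowhere dense analytic subset $X'$ and the singular locus $S(X)$. First, restrict $f$ to the complex manifold $X \setminus S(X)$. A suitably small representative of $(X,x)$ is relatively compact, so $f$ is bounded there; moreover $f$ is holomorphic on $(X \setminus S(X)) \setminus X'$, and $X' \cap (X \setminus S(X))$ is a nowhere dense closed analytic subset of the complex manifold $X \setminus S(X)$. The classical Riemann extension theorem (for bounded holomorphic functions on complex manifolds) then implies that $f|_{X \setminus S(X)}$ is holomorphic.

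Second, apply characterization (3) of normality from the previous proposition: the bounded holomorphic function $f|_{X \setminus S(X)}$ extends to a holomorphic function $\tilde{f}$ on $X$. To conclude, I would observe that both $\tilde{f}$ and $f$ are continuous on $X$ (the former because it is holomorphic, the latter by hypothesis) and coincide on the dense open subset $X \setminus S(X)$; since $X$ is reduced, $S(X)$ has empty interior, so $X \setminus S(X)$ is dense, and two continuous functions agreeing on a dense subset must coincide everywhere. Hence $f = \tilde{f}$ is holomorphic on $X$.

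The main potential obstacle is the fact that $X'$ need not contain $S(X)$ (nor vice versa), which is why the extension has to be done in two stages: first across $X'$ using the manifold Riemann extension theorem, then across $S(X)$ using the characterization of normality. Each stage requires boundedness, which is automatic from continuity on a compact representative of the germ. The rest is routine.
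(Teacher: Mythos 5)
Your proof is correct and follows exactly the route the paper intends: the corollary is stated there as an easy consequence of the preceding proposition, and your two-stage extension (Riemann extension across $X'$ on the manifold $X \setminus S(X)$, then characterization (3) of normality to cross $S(X)$, with the final identification by continuity and density) is the standard way to carry that out. No gaps worth flagging.
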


Any reduced germ has a canonical \emph{normalization}, 
whose multilocal ring (direct sum of a finite collection of local rings) 
is the integral closure of the initial local ring in its 
total ring of fractions. It may be characterized in the following way:

\begin{proposition}  \label{charnorm}
    Let $(X,x)$ be a germ of  reduced complex analytic space. 
   There exists, up to unique isomorphism above $(X,x)$, a unique 
   finite morphism $\nu : (\tilde{X}, \tilde{x}) \to (X, x)$ from a finite disjoint 
   union of germs to $(X,x)$ (here $\tilde{x}$ denotes a finite set of points), such that:
     \begin{itemize}
        \item $\nu$ is an isomorphism outside the non-normal locus of $X$. 
        \item $\tilde{X}$ is normal. 
     \end{itemize}
\end{proposition}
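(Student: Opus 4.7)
The plan is to construct $(\tilde{X}, \tilde{x})$ directly from the algebra of $\mathcal{O}_{X,x}$. Write $\mathcal{O} := \mathcal{O}_{X,x}$ and let $K$ be its total ring of fractions. Since the reduced germ $(X,x)$ has only finitely many irreducible components, $K$ decomposes as a finite product of fields, indexed by the minimal primes of $\mathcal{O}$. Let $A \subset K$ denote the integral closure of $\mathcal{O}$ in $K$. The key analytic input, which I would cite rather than reprove, is the Oka--Grauert--Remmert finiteness theorem: $A$ is a finitely generated $\mathcal{O}$-module. This is the step where the analytic category genuinely requires more than formal commutative algebra, and it is the main obstacle of the argument.

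Granted finiteness, the rest is essentially formal. Being a finite extension of a local ring, $A$ is semi-local; write $A = A_1 \times \cdots \times A_k$ as a product of its local factors. Since a factor of an integrally closed ring is itself integrally closed in its own field of fractions, each $A_i$ is a normal local analytic algebra, hence the local ring of a normal germ $(\tilde{X}_i, \tilde{x}_i)$. Setting $(\tilde{X}, \tilde{x}) := \bigsqcup_{i=1}^{k} (\tilde{X}_i, \tilde{x}_i)$, the inclusion $\mathcal{O} \hookrightarrow A$ corresponds, via the analytic Spec construction, to a finite morphism $\nu : (\tilde{X}, \tilde{x}) \to (X,x)$.

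To see that $\nu$ is an isomorphism outside the non-normal locus, I would carry out the construction on a small representative of $(X,x)$, obtaining a coherent $\mathcal{O}_X$-algebra $\mathcal{A}$ whose relative analytic spectrum is $\tilde{X}$. At a point $y$ where $X$ is already normal, $\mathcal{O}_{X,y}$ equals its own integral closure in its total ring of fractions, so $\mathcal{A}_y = \mathcal{O}_{X,y}$ and $\nu$ is an isomorphism of germs above $y$. For uniqueness, let $\nu' : (X', x') \to (X,x)$ be a second finite morphism from a normal germ which is an isomorphism outside the non-normal locus. The pushforward $\nu'_* \mathcal{O}_{X'}$ is a coherent $\mathcal{O}_X$-subalgebra of $K$ that is integral over $\mathcal{O}$ (by finiteness of $\nu'$) and stalkwise a product of normal local rings (by normality of $X'$), hence integrally closed in $K$; so $\nu'_* \mathcal{O}_{X'} = A$, yielding the required unique isomorphism over $(X,x)$.
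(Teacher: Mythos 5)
The paper does not actually prove this proposition: it is stated as a classical fact, with the construction indicated only in the sentence preceding it (the multilocal ring of the normalization is the integral closure of $\mathcal{O}_{X,x}$ in its total ring of fractions) and a pointer to Greco's book for details. Your argument is precisely the fleshing-out of that construction, with the Oka--Grauert--Remmert finiteness/coherence theorem correctly identified as the essential analytic input, so in substance you follow the same route the paper alludes to.

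One step is justified incorrectly as written. You decompose $A$ into a product of local rings on the grounds that ``being a finite extension of a local ring, $A$ is semi-local''; but a semi-local ring need not be a product of local rings (a domain with two maximal ideals is already a counterexample). What actually gives the splitting is: first, $K=\prod_i K_i$ over the minimal primes forces $A=\prod_i A_i$, where $A_i$ is the integral closure of $\mathcal{O}/\mathfrak{p}_i$ in $K_i$, since integral closure commutes with finite products; second, each $A_i$ is local because it is a finite algebra over the \emph{Henselian} local ring $\mathcal{O}$ --- a finite algebra over a Henselian local ring is a product of local rings, and $A_i$, being a domain, admits no nontrivial idempotents. Henselianness of analytic local rings (equivalently, the fact that the normalization of an irreducible germ has a single point over $x$) is a genuine input here, not a formal consequence of semi-locality. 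With that repair the argument is complete; the remaining points --- coherence of the integral-closure sheaf giving the isomorphism over the normal locus, and the identification $\nu'_*\mathcal{O}_{X'}=A$ for uniqueness, which uses that $\nu'$ is generically an isomorphism in order to embed $\nu'_*\mathcal{O}_{X'}$ into $K$ --- are standard and correctly handled.
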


Therefore, normal germs are necessarily irreducible. The normalization separates 
the irreducible components and eliminates the components of their singular 
loci which are of codimension $1$. In particular, 
normal curve singularities are precisely the smooth ones and normal surface singularities are necessarily isolated. The converse is not true in any dimension 
(see the explanations given in the proof of Proposition \ref{3fold}). 
Nevertheless, complete intersection isolated singularities of dimension $2$ or higher 
are necessarily normal (being Cohen-Macaulay, see the same proof). 
This is the reason why it is more difficult to exhibit examples of isolated 
non-normal singularities in dimension $2$ or higher than in dimension $1$. 

Normal singularities are of fundamental importance even if one 
is interested in non-normal ones: a way to study them is through their morphism $\nu$ 
of {\em normalization}, characterized in the previous proposition. For much more details 
about normal varieties and the normalization maps, one may consult Greco's book 
\cite{G 78}.

\medskip

One has a preferred family of representatives of any germ with isolated singularity:

\begin{definition} \label{miln}
    Let $(X,x)$ be a germ of  reduced and irreducible complex analytic space with 
    isolated singularity. Choose a representative of it embedded in 
     $(\C^n, 0)$. Consider the euclidean sphere $\mathbb{S}^{2n -1}(r) \subset 
     \C^n$ of radius $r >0$, centered at $0$. Denote by $\mathbb{B}^{2n}(r)$ 
    the ball bounded by it.  A ball  $\mathbb{B}^{2n}(r_0)$ is called a 
    {\bf Milnor ball} if all the spheres of radius $r \in (0, r_0]$ are transversal to the representative. 
    In this case, the intersection $X \cap \mathbb{B}^{2n}(r_0)$ is called a 
    {\bf Milnor representative} of the germ and  $X \cap \mathbb{S}^{2n-1}(r_0)$ 
    is the {\bf boundary} of the germ. 
\end{definition}

The boundary is independent, up to diffeomorphisms preserving the orientation, 
of the choices done in this construction (see Looijenga \cite{L 84}). We will denote 
its oriented diffeomorphism type, or a representative of it, by 
$\partial (X,x)$. One may show, moreover, that the boundary of an isolated singularity 
is isomorphic to the boundary of its normalization. This may seem obvious intuitively, 
as the normalization morphism is in this case an isomorphism outside the singular point, 
but one has to work more, because the lift to the normalization of the euclidean distance 
function serving to define the intersections with spheres for the initial germ are not 
euclidean distance functions for the normalization. For a detailed treatment of this issue, 
see \cite{CNP 06}. 

Let us fix a Milnor ball $\mathbb{B}^{2n}(r_0)$.  At each point of the representative 
$X \cap \mathbb{S}^{2n-1}(r_0)$ of $\partial (X,x)$, consider the maximal subspace 
of the tangent space which is invariant by the complex multiplication. It is a (real) 
hyperplane, canonically oriented by the complex multiplication. This field of 
hyperplanes is moreover a \emph{contact structure}, as a consequence of the fact that 
the spheres by which we intersect are strongly pseudoconvex. 
In fact, this oriented contact manifold is also independent of the choices. 
We call it the {\bf contact boundary} $(\partial (X,x), \xi(X,x))$ of the singularity 
$(X,x)$ (for details, see \cite{CNP 06}).  In the same reference, we introduced the following 
terminology:

\begin{definition} \label{milnfill}
  An oriented (contact) manifold is called {\bf Milnor fillable} if it is isomorphic 
  to the (contact) boundary of an isolated singularity. 
\end{definition}

\medskip
From now on, we will restrict to surfaces. One of the most important tools to study 
them is: 

\begin{definition} \label{resol}
      Let $(S,0)$ be a normal surface singularity which is not smooth. 
      A {\bf resolution} of it is 
      a morphism $\pi : (\Sigma, E) \to (S, 0)$, where $E$ denotes the preimage 
      of $0$ by $\pi$, such that:
        \begin{itemize}
            \item  $\pi$ is proper;
            
            \item $\Sigma$ is smooth;
            
            \item $\pi$ is an isomorphism from $\Sigma \setminus E$ to $S \setminus 0$. 
        \end{itemize}
        The subset $E$ of $\Sigma$, which is always a connected divisor, is called 
        the {\bf exceptional divisor} of $\Sigma$. 
        If $E$ is a divisor with normal crossings whose irreducible components are smooth, 
        we say that $\pi$ is a {\bf simple normal crossings (snc)} resolution. In this last case, 
        the {\bf dual graph} of the resolution has as vertices the irreducible components 
        of $E$, the edges being in bijection with the intersection points of those components. 
\end{definition}

Note that the hypothesis of having simple normal crossings prohibits the existence of loops 
in the dual graphs, but not that of multiple edges. In fact, the number of edges between two 
vertices is equal to the intersection number of the corresponding components. 

There always exist resolutions. Moreover, there is always a \emph{minimal} snc 
resolution, unique up to unique isomorphism above $(S, 0)$, the minimality meaning 
that any other snc resolution factors through it. It is this resolution which is most 
widely used for the topological study of the boundary of the singularity. Nevertheless, 
for its algebraic study, sometimes it is important to work with the \emph{minimal 
resolution}, in which we don't ask any more the exceptional resolution to have normal 
crossings or smooth components (see an example in Theorem \ref{minel}). 
It is again a theorem that such a resolution also exists up to unique isomorphism.

If $\pi$ is a resolution of $(S,0)$, denote by $\mathrm{Eff}(\pi)$ the free abelian 
semigroup generated by the irreducible components of its exceptional divisor, that is, 
the additive semigroup of the integral effective divisors supported by $E$. 
If $Z_1, Z_2 \in \mathrm{Eff}(\pi)$, we say that $Z_1$ {\bf is less than} 
$Z_2$ if $Z_2 - Z_1$ is also effective and $Z_1 \neq Z_2$. We write then $Z_1 < Z_2$.

\begin{proposition}  \label{numcycl}
   Let $\pi$ be any resolution of the normal surface singularity $(S,0)$. 
  There exists a non-zero cycle $Z_{num} \in \mathrm{Eff}(\pi)$, 
  called the {\bf numerical cycle} of $\pi$, 
  which intersects non-positively all the irreducible components  of $E$, and which is 
  less than all the other cycles having this property. 
\end{proposition}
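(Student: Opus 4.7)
The plan is to introduce the set
\[
\mathcal{S} := \{ Z \in \mathrm{Eff}(\pi) \setminus \{0\} : Z \cdot E_i \leq 0 \text{ for every component } E_i \subset E \}
\]
and to show that it is non-empty and stable under componentwise minimum; it will then possess a unique smallest element, which is the sought-after cycle $Z_{num}$.

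For non-emptiness, I would take any non-constant holomorphic function $f$ on a representative of $(S,0)$ vanishing at $0$. Its pullback $f\circ \pi$ is a holomorphic function on $\Sigma$, and its divisor can be written $\mathrm{div}(f\circ \pi) = D + \sum_j m_j E_j$, where each $m_j \geq 1$ is the order of vanishing of $f \circ \pi$ along $E_j$ and $D$ is the strict transform of the zero locus of $f$, sharing no component with $E$. Since $\mathrm{div}(f \circ \pi)$ is principal, it has degree zero on every compact curve $E_i$:
\[
0 = D \cdot E_i + \Big(\sum_j m_j E_j\Big) \cdot E_i.
\]
The strict transform meets $E$ properly, so $D \cdot E_i \geq 0$, whence $\sum_j m_j E_j \in \mathcal{S}$. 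A second preliminary is the full-support property: every $Z = \sum a_i E_i \in \mathcal{S}$ must have all $a_i > 0$. Indeed, if some $a_j = 0$, then by connectedness of $E$ one can find such a $j$ adjacent to an $E_k$ with $a_k > 0$, and the computation $Z \cdot E_j = \sum_\ell a_\ell(E_\ell \cdot E_j)$ has the $\ell = j$ term zero and the $\ell = k$ term strictly positive, so $Z \cdot E_j > 0$, contradicting $Z \in \mathcal{S}$.

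The core of the argument is closure under min. Given $Z_1 = \sum a_i E_i$ and $Z_2 = \sum b_i E_i$ in $\mathcal{S}$, set $Z := \sum c_i E_i$ with $c_i := \min(a_i, b_i)$; the full-support property ensures $Z \neq 0$. To see $Z \cdot E_i \leq 0$ for a fixed $i$, assume without loss of generality $c_i = a_i$. For each $k \neq i$ the off-diagonal intersection $E_k \cdot E_i$ is non-negative and $c_k \leq a_k$, so $c_k(E_k \cdot E_i) \leq a_k(E_k \cdot E_i)$; meanwhile $c_i E_i^2 = a_i E_i^2$ on the nose. Summing, $Z \cdot E_i \leq Z_1 \cdot E_i \leq 0$. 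Any descending chain in $\mathcal{S}$ terminates, so $\mathcal{S}$ has a minimal element $Z_{num}$; for any other $Z' \in \mathcal{S}$, the cycle $\min(Z_{num}, Z')$ lies in $\mathcal{S}$ and is $\leq Z_{num}$, so minimality forces $\min(Z_{num}, Z') = Z_{num}$, i.e.\ $Z_{num} \leq Z'$. This identifies $Z_{num}$ as the unique smallest element of $\mathcal{S}$.

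The main subtlety is the diagonal term: since $E_i^2$ is typically negative and can be large in absolute value, one might fear that switching $c_i$ between $a_i$ and $b_i$ makes $Z \cdot E_i$ jump above $\min(Z_1 \cdot E_i, Z_2 \cdot E_i)$. The case split is precisely what defuses this: by fixing $c_i = a_i$, the $E_i^2$-contribution to $Z \cdot E_i$ matches the one in $Z_1 \cdot E_i$ exactly, so the bound $Z \cdot E_i \leq Z_1 \cdot E_i$ is rigid. It is worth noting that Mumford's negative definiteness of the intersection form is not invoked here; the argument uses only compactness and connectedness of $E$, the non-negativity of off-diagonal intersections, and the existence of a single non-constant holomorphic function on $(S,0)$ vanishing at $0$.
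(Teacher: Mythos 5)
Your argument is correct, but there is nothing in the paper to compare it against: Proposition \ref{numcycl} is stated without proof, the existence of the numerical cycle being attributed to Artin \cite{A 66} and the algorithm computing it to Laufer \cite{L 72}. What you have written is in substance a faithful reconstruction of the classical argument. The three ingredients are all in place: non-emptiness of $\mathcal{S}$ via the pullback of a non-constant holomorphic function vanishing at $0$ (such a function exists since $(S,0)$ has positive dimension --- restrict a generic coordinate of an embedding --- and the multiplicities $m_j$ are $\geq 1$ because $f\circ\pi$ vanishes on $E_j\subset\pi^{-1}(0)$), using that a principal divisor has degree zero on each compact $E_i$ while the strict transform contributes non-negatively; full support via connectedness of $E$ (available here by normality, as the paper notes that $E$ is always connected); and stability of $\mathcal{S}$ under componentwise minimum, where your case split $c_i=a_i$ correctly isolates the only dangerous term, the diagonal one, by making it agree exactly with the corresponding term of $Z_1\cdot E_i$. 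The descending chain argument then yields the unique smallest element, which is what the paper's order relation ``$Z_{num}$ is less than every other such cycle'' requires. Your closing remark is also accurate and worth keeping: negative definiteness of the intersection form is never used, only compactness and connectedness of $E$, non-negativity of the off-diagonal intersection numbers, and one non-constant function.
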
 

By definition, the numerical cycle is unique, once the resolution is fixed. It was defined 
first by M. Artin \cite{A 66}, and Laufer \cite{L 72} gave an algorithm to compute it. 

We will need a second cycle supported by $E$, this time with \emph{rational} coefficients, 
possibly non-integral. 

\begin{proposition}
     Let $\pi: (\Sigma, E)\to (S,0)$ be any resolution of the normal surface singularity $(S,0)$. 
    There exists a unique cycle $Z_K$ supported by  $E$, with rational coefficients,  
    such that $Z_K \cdot E_i = - K_{\Sigma}\cdot E_i$ for any component $E_i$ 
    of $E$. It is called the {\bf anticanonical cycle} of $\pi$. Here 
    $K_{\Sigma}$ denotes any canonical divisor of $\Sigma$. 
\end{proposition}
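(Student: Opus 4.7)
The plan is to reduce the problem to the invertibility of the exceptional intersection form. Let $E_1,\dots,E_n$ be the irreducible components of $E$ and look for $Z_K=\sum_{j=1}^n a_j E_j$ with $a_j\in\Q$. The defining conditions $Z_K\cdot E_i=-K_\Sigma\cdot E_i$ for $i=1,\dots,n$ form the linear system
\[
   \sum_{j=1}^n (E_i\cdot E_j)\, a_j \;=\; -\,K_\Sigma\cdot E_i, \qquad i=1,\dots,n,
\]
so existence and uniqueness of $Z_K$ are equivalent to the non-singularity of the intersection matrix $M=(E_i\cdot E_j)_{1\leq i,j\leq n}$.

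The key input is the classical theorem of Mumford (extended by Grauert) that, for any resolution of a normal surface singularity, the intersection matrix of the exceptional components is negative definite. In particular $M$ is invertible over $\Q$, which yields a unique rational solution $(a_1,\dots,a_n)$, and hence a unique cycle $Z_K$. This is the same mechanism used to produce the numerical cycle of Proposition \ref{numcycl}, except that here we do not need to worry about effectiveness and can work directly over $\Q$ rather than in $\mathrm{Eff}(\pi)$.

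Before applying linear algebra, I would briefly justify that the right-hand side is well-defined: any two canonical divisors of $\Sigma$ differ by a principal divisor $\mathrm{div}(f)$, and since each $E_i$ is a compact curve, $\mathrm{div}(f)\cdot E_i=\deg(f|_{E_i})=0$. Thus the integer $K_\Sigma\cdot E_i$ depends only on the canonical class, not on the chosen representative, and the system above is intrinsically attached to $\pi$.

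The only genuine obstacle is the negative-definiteness of $M$, which is the serious ingredient underneath the proposition. Once this fact is invoked, the remainder of the argument is elementary: invert $M$, set $\mathbf{a}=-M^{-1}\mathbf{k}$ where $\mathbf{k}=(K_\Sigma\cdot E_i)_i$, and observe that any other cycle $Z_K'$ satisfying the same conditions would make $Z_K-Z_K'$ lie in the kernel of $M$, forcing $Z_K=Z_K'$.
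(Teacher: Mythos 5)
Your proof is correct, and it is the standard argument: the paper itself states this proposition without proof, as a recalled fact. The two points you isolate are exactly the ones that matter -- well-definedness of $K_\Sigma\cdot E_i$ (a principal divisor restricts to a degree-zero line bundle on each compact curve $E_i$) and the invertibility of the intersection matrix, which follows from Mumford's theorem that the exceptional intersection form of a resolution of a normal surface singularity is negative definite. Nothing is missing.
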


The canonical divisors on $\Sigma$ are the divisors of the meromorphic $2$-forms 
on a neighborhood of $E$ in $\Sigma$. Such forms are precisely the lifts of the meromorphic 
$2$-forms on a neighborhood of $0$ in $S$. Of special importance are the normal 
surface singularities admitting such a $2$-form which, moreover, is holomorphic and 
does not vanish on $S \setminus 0$: 

\begin{definition}
   An isolated surface singularity $(S,0)$ is {\bf Gorenstein} if it is normal and if 
   it admits a non-vanishing holomorphic form of degree $2$ on $S \setminus 0$. 
\end{definition}

In fact, isolated complete intersection surface singularities are not only normal, but also 
Gorenstein. We remark that the topological types of Gorenstein 
isolated surface singularities are known by \cite{P 11}, but it is an open question to 
describe the topological types of those which are complete intersections or hypersurfaces. 

Both the anticanonical cycle and the notion of Gorenstein singularity are defined using 
differential forms of degree $2$. Such forms are also useful to define several 
important notions of \emph{genus}:

\begin{definition} \label{geomgen}
   Let $(S,0)$ be a normal surface singularity. Its {\bf geometric genus} 
   $p_g(S,0)$ is equal to the dimension of the space of holomorphic $2$-forms 
   on $S \setminus 0$, modulo the subspace of forms which extend 
   holomorphically to a resolution of $S$. 
   
   If $Z$ is a compact divisor on a smooth complex surface $\Sigma$, its 
   {\bf arithmetic genus} $p_a(Z)$ is equal to $1 + \dfrac{1}{2}Z\cdot (Z + K_{\Sigma})$. 
\end{definition}

In the same way as the \emph{rational} curves are those of smooth 
algebraic curves of genus (in the usual 
Riemannian sense) $0$, M. Artin  \cite{A 66} defined:

\begin{definition}
   A normal surface singularity is {\bf rational} if its geometric genus is $0$. 
\end{definition}

By contrast with the case of curves, there is an infinite set of topological types 
of rational surface singularities. A basic property of them is that their minimal 
resolutions are snc, that all the irreducible components of their exceptional divisors 
are rational curves, and that their dual graphs are trees. But this is not 
enough to characterize them. In fact, as proved by M. Artin \cite{A 66}:

\begin{proposition} \label{charat}
   Let $(S,0)$ be a normal surface singularity and let 
   $\pi: (\Sigma, E)\to (S,0)$ be any resolution of  it. Then $(S,0)$ is rational 
   if and only if $p_a(Z_{num}) =0$. 
\end{proposition}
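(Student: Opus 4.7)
The plan is to express $p_g(S,0)$ via sheaf cohomology on the resolution, reduce the computation to the cohomology of structure sheaves of effective cycles supported on $E$, and single out $Z_{num}$ as the decisive testing cycle. Using normality of $(S,0)$, the Leray spectral sequence for $\pi$, and the theorem on formal functions, I would identify
\[
p_g(S,0) \;=\; h^1(\Sigma, \mathcal{O}_\Sigma) \;=\; \sup_{Z > 0}\, h^1(Z, \mathcal{O}_Z),
\]
where $Z$ ranges over positive cycles supported on $E$. The short exact sequence
\[
0 \longrightarrow \mathcal{O}_{E_i}(-Z) \longrightarrow \mathcal{O}_{Z+E_i} \longrightarrow \mathcal{O}_Z \longrightarrow 0,
\]
combined with the vanishing of $H^2$ for line bundles on the curves $E_i$, shows that $Z \mapsto h^1(\mathcal{O}_Z)$ is non-decreasing, so this supremum is actually attained.

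Next I would compute $h^0(\mathcal{O}_{Z_{num}}) = 1$ by means of Laufer's algorithm, which produces a chain
\[
0 < E_{j_0} = Z_1 < Z_2 < \cdots < Z_k = Z_{num},
\]
with $Z_{j+1} = Z_j + E_{i_j}$ and $Z_j \cdot E_{i_j} > 0$ for each $j \geq 1$. At every step the line bundle $\mathcal{O}_{E_{i_j}}(-Z_j)$ has negative degree on the irreducible curve $E_{i_j}$, hence no sections, so the long exact cohomology sequence forces $h^0(\mathcal{O}_{Z_{j+1}}) = h^0(\mathcal{O}_{Z_j})$; starting from the initial irreducible component, this propagates $h^0(\mathcal{O}_{Z_{num}}) = 1$. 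Riemann--Roch on $Z_{num}$ then yields
\[
h^1(\mathcal{O}_{Z_{num}}) \;=\; 1 - \chi(\mathcal{O}_{Z_{num}}) \;=\; p_a(Z_{num}),
\]
and in particular $p_a(Z_{num}) \geq 0$. The forward implication is now immediate: if $p_g(S,0) = 0$ then $p_a(Z_{num}) = h^1(\mathcal{O}_{Z_{num}}) \leq p_g(S,0) = 0$, hence $p_a(Z_{num}) = 0$.

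For the converse, assume $p_a(Z_{num}) = 0$, i.e.\ $h^1(\mathcal{O}_{Z_{num}}) = 0$, and extend this vanishing to every positive $Z$. I would first apply the adjunction identity
\[
p_a(Z_1 + Z_2) \;=\; p_a(Z_1) + p_a(Z_2) + Z_1 \cdot Z_2 - 1
\]
to decompositions of $Z_{num}$ to deduce that $p_a(E_i) = 0$ for every component (so each $E_i \cong \mathbb{P}^1$) and that the dual graph of the resolution is a tree. With these structural inputs, the vanishing is then propagated to arbitrary $Z$ by induction: at each step $Z' \to Z' + E_i$ of a suitably ordered computation sequence starting from $Z_{num}$, the exact sequence together with the rational shape of $E_i$ ensures that either $\mathcal{O}_{E_i}(-Z')$ has non-negative degree and contributes no $H^1$, or the coboundary from $H^0(\mathcal{O}_{Z'})$ surjects onto the new $H^1$; in either case $h^1(\mathcal{O}_{Z'+E_i}) = h^1(\mathcal{O}_{Z'})$. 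Iterating yields $h^1(\mathcal{O}_Z) = 0$ for all $Z > 0$, whence $p_g(S,0) = 0$.

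The main obstacle is this last propagation step. Once $Z$ passes $Z_{num}$, the line bundle $\mathcal{O}_{E_i}(-Z)$ can become arbitrarily negative on $E_i$, so the vanishing of its $H^1$ is far from tautological, and one must genuinely exploit both the $\mathbb{P}^1$-shape of all components and the tree structure of the dual graph in order to choose an ordering of the successive added components that guarantees the required surjectivity at every step.
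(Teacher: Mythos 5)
First, a caveat: the paper states this proposition as a theorem of Artin and gives no proof of its own, so there is nothing internal to compare your argument with; I can only assess it as a reconstruction of Artin's proof. Most of it is correct and is indeed the standard route: the identification $p_g(S,0)=h^1(\mathcal{O}_\Sigma)=\sup_{Z>0}h^1(\mathcal{O}_Z)$ via the theorem on formal functions and the monotonicity coming from the decomposition sequences; the computation $h^0(\mathcal{O}_{Z_{num}})=1$ along a Laufer computation sequence; the resulting identity $p_a(Z_{num})=h^1(\mathcal{O}_{Z_{num}})\geq 0$; the forward implication; and the deduction that $p_a(Z_{num})=0$ forces every $E_i$ to be a smooth rational curve and the dual graph to be a tree.

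The gap is the one you flag yourself, and the tools you propose for closing it are not the ones that close it. What actually produces the required ordering is not the $\mathbb{P}^1$-shape of the components nor the tree structure of the graph, but Artin's purely numerical lemma: $p_a(Z_{num})=0$ implies $p_a(Y)\leq 0$ for \emph{every} cycle $Y>0$. This lemma, which is the real combinatorial content of the converse, is entirely absent from your proposal. Granting it, one writes $p_a(Y)=1+\frac{1}{2}\sum_i a_i\left(Y\cdot E_i-E_i^2-2\right)$ (valid since $K_\Sigma\cdot E_i=-E_i^2-2$ for smooth rational $E_i$); as $p_a(Y)\leq 0$, some $E_i$ in the support of $Y$ satisfies $Y\cdot E_i\leq E_i^2+1$, i.e.\ $(Y-E_i)\cdot E_i\leq 1$. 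Peeling such components off one at a time gives an ordering $0=Y_0<Y_1<\dots<Y_N=Y$ with $Y_j\cdot E_{i_{j+1}}\leq 1$ at every step, so every $\mathcal{O}_{E_{i_{j+1}}}(-Y_j)$ has degree $\geq -1$ on $\mathbb{P}^1$ and its $H^1$ vanishes. Note that the correct threshold is degree $\geq -1$, not $\geq 0$, and that the second branch of your dichotomy (surjectivity of the coboundary onto $H^1(\mathcal{O}_{E_i}(-Z'))$) is never needed --- which is fortunate, since you give no way of proving it. That the two structural facts alone cannot suffice is visible on the configuration of a central $(-2)$-curve meeting four disjoint $(-3)$-curves: all components are rational, the graph is a tree, the intersection form is negative definite, yet $p_a(Z_{num})=1$ and no component of $Z_{num}$ can be peeled in the above sense; only the hypothesis $p_a(Z_{num})=0$, converted into $p_a(Y)\leq 0$ for all $Y$, rules such configurations out and makes the ordering exist. (The alternative classical route avoids orderings of arbitrary cycles altogether: induct over the cofinal family $nZ_{num}$ using the vanishing $H^1(\mathcal{O}_{Z_{num}}(D))=0$ for any $D$ with $D\cdot E_i\geq 0$ for all $i$, applied to $D=-(n-1)Z_{num}$.) Either way, you must state and prove the numerical lemma; without it the converse is not established.
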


The reader interested in the combinatorics of rational surface singularities 
may consult L\^e and Tosun's paper  \cite{LT 04} and Stevens' paper \cite{S 13}. 

The singularities on which we focus in the sequel are not rational, as 
their resolutions contain non-rational exceptional curves:

\begin{definition} \label{simplell}
   A normal surface singularity is called {\bf simple elliptic} if the exceptional 
   divisor of its minimal resolution is an elliptic curve.
\end{definition}

Simple elliptic singularities are necessarily Gorenstein, as a consequence of the following 
theorem of Laufer \cite[Theorems 3.4 and 3.10]{L 77}: 

\begin{theorem} \label{minel} 
  Let $(S,0)$ be a normal surface singularity. 
  Working with its minimal resolution, the following facts are equivalent:
  \begin{enumerate}
     \item One has $p_a(Z_{num})=1$ and $p_a(D)<1$ for all $0<D<Z_{num}$.
     
     \item The fundamental and anticanonical cycles are equal: $Z_{num}= Z_K$. 
     
     \item \label{motiv} One has $p_a(Z_{num})=1$ and any connected
       proper subdivisor of $E$  
       contracts to a rational singularity.
       
     \item $p_g(S,0)=1$ and $(S,0)$ is Gorenstein.       
  \end{enumerate}
\end{theorem}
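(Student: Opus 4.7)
The plan is to establish the equivalences among the three combinatorial conditions (1), (2), (3) first, since they only involve intersection-theoretic data on the exceptional divisor $E$, and then to match these with the analytic condition (4) via sheaf cohomology on the resolution $\Sigma$.

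For (1) $\Leftrightarrow$ (3), I would invoke Proposition \ref{charat} applied to any connected proper subdivisor $E' \subsetneq E$: such $E'$ contracts to a rational singularity if and only if $p_a$ of its own numerical cycle vanishes, which, combined with Artin's observation that for rational singularities $p_a(D) \leq 0$ holds for \emph{every} effective cycle supported on $E'$, shows that (3) amounts to $p_a(D) \leq 0$ for all $0 < D < Z_{num}$. Since $p_a$ is integer-valued on integral cycles, this is the same as the strict inequality $p_a(D) < 1$ appearing in (1).

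For (1) $\Leftrightarrow$ (2), I would work with the adjunction formula $p_a(D) = 1 + \tfrac{1}{2} D \cdot (D + K_\Sigma)$. Setting $F := Z_K - Z_{num}$, the defining property of $Z_K$ gives $F \cdot E_i = -(Z_{num} + K_\Sigma) \cdot E_i$ for each component $E_i$. Starting from (1), the equality $p_a(Z_{num}) = 1$ yields $F \cdot Z_{num} = 0$, and the minimality inequalities $p_a(Z_{num} - E_j) < 1$ together with adjunction on the $E_j$ pin down the sign of $(Z_{num} + K_\Sigma) \cdot E_i$ component-wise; the negative-definiteness of the intersection form on $E$ then forces $F = 0$. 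Conversely, (2) immediately yields $p_a(Z_{num}) = 1$, and the strict inequalities for $0 < D < Z_{num}$ follow by expanding the adjunction formula using $(Z_{num} + K_\Sigma) \cdot E_i = 0$ and the positivity of the coefficients of $Z_{num}$.

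For the equivalence with (4), the Gorenstein condition is equivalent to $Z_K$ being integral: a nowhere-vanishing holomorphic $2$-form on $S \setminus 0$ lifts to $\Sigma$ as a $2$-form with polar divisor supported on $E$, and this divisor is forced to equal $-Z_K$ by the anticanonical intersection equations. Hence (2) gives Gorenstein. To compute the geometric genus, I would use the identification $p_g(S,0) = \dim H^1(\Sigma, \mathcal{O}_\Sigma)$ (via the Leray spectral sequence and the Steinness of $S$), the long exact cohomology sequences of $0 \to \mathcal{O}_\Sigma(-nZ_{num}) \to \mathcal{O}_\Sigma \to \mathcal{O}_{nZ_{num}} \to 0$ for increasing $n$, and Grauert-Riemenschneider vanishing, ultimately reducing the calculation to $h^1(Z_{num}, \mathcal{O}_{Z_{num}}) = p_a(Z_{num}) = 1$. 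The converse implication (4) $\Rightarrow$ (1) is, I expect, the main technical obstacle: Gorenstein gives integrality of $Z_K$, and from $p_g = 1$ one must use Serre duality on $Z_{num}$ (with $\omega_{Z_{num}} = \mathcal{O}_{Z_{num}}(Z_{num} + K_\Sigma)$) to stabilize the cohomology of the non-reduced cycles $nZ_{num}$ and extract both the identity $Z_K = Z_{num}$ and the minimality property for proper subcycles.
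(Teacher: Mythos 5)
The paper does not prove this statement at all: it is quoted verbatim as Laufer's theorem, with the citation \cite[Theorems 3.4 and 3.10]{L 77} standing in for the proof. So there is no in-paper argument to compare against, and your proposal has to be judged on its own. Its overall architecture (equivalate the three numerical conditions first via adjunction and Artin's rationality criterion, then connect to $p_g$ and the Gorenstein property by sheaf cohomology on the resolution) is indeed the strategy of Laufer's original proof.

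That said, there are two genuine gaps. The more serious one is the sentence ``the Gorenstein condition is equivalent to $Z_K$ being integral,'' from which you conclude ``hence (2) gives Gorenstein.'' Only one direction of that equivalence is true: Gorenstein implies $Z_K$ integral (i.e.\ \emph{numerically} Gorenstein), but the converse fails in general --- this is exactly why the paper's reference \cite{P 11}, on numerically Gorenstein singularities being merely \emph{homeomorphic} to Gorenstein ones, is a nontrivial theorem. To get (2) $\Rightarrow$ (4) one must actually produce a nowhere-vanishing $2$-form on $S \setminus 0$, i.e.\ show that $\mathcal{O}_\Sigma(K_\Sigma + Z_K)$ is trivial near $E$; in Laufer's argument this comes out of the same cohomological computations that give $p_g=1$ (vanishing of $H^1$ of suitable twists along a computation sequence), not from the numerics alone. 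The second, smaller gap is in (1) $\Leftrightarrow$ (3): cycles $D$ with $0 < D < Z_{num}$ need not be supported on a \emph{proper} subdivisor of $E$ (they can have full support when some coefficient of $Z_{num}$ exceeds $1$), so condition (3) does not directly control $p_a(D)$ for all such $D$; Laufer needs a separate lemma, proved with computation sequences, to handle the full-support case. The direction you flag as the main obstacle, (4) $\Rightarrow$ (1), is indeed where the remaining work lies, and your plan for it (Serre duality on $Z_{num}$ and stabilization of $h^1(\mathcal{O}_{nZ_{num}})$) is the right kind of tool but is only a plan.
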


Laufer introduced a special name (making reference to condition
(\ref{motiv})) for the singularities satisfying one of
the previous  conditions:

\begin{definition} \label{defminel}
  A normal surface singularity satisfying one of the  equivalent 
  conditions stated in Theorem \ref{minel} is called a {\bf minimally 
  elliptic} singularity. 
\end{definition}

In fact, as may be rather easily proved using characterization (\ref{motiv}) of 
minimally elliptic singularities, the simple elliptic singularities are precisely 
the minimally elliptic ones which admit resolutions whose exceptional 
divisors have at least one non-rational component.

\bigskip

\section{Generalities on deformations and smoothings of isolated singularities}
\label{sec:gen}

In this section 
we recall the basic definitions and properties about deformations 
of isolated singularities which are needed in the sequel. 
For more details, one may consult Looijenga \cite{L 84}, 
Looijenga \& Wahl \cite{LW 86}, Stevens \cite{S 03}, 
Greuel, Lossen \& Shustin \cite{GLS  07} and N\'emethi \cite{N 13}. 
\medskip

\begin{definition} \label{deform} \index{Deformations}
  Let $(X,x)$ be a germ of a complex analytic space. A 
  {\bf deformation} of $(X,x)$ is a germ of {\em flat} morphism 
  $\psi:(Y,y)\rightarrow (S,s)$ together with an 
  isomorphism between $(X,x)$ and the special fiber $\psi^{-1}(s)$. 
  The germ $(S,s)$ is called the \textbf{base} of the deformation. 
\end{definition}

For example, when $X$ is reduced, $f\in m_{X,x}$ is flat as a morphism
$(X,x) \stackrel{f}{\rightarrow} (\C,0)$ if and only if $f$ does not
divide zero, that is, if and only if $f$ does not vanish on a whole irreducible component 
of $(X,x)$. Such deformations over germs of smooth curves are called
$1$-{\em parameter deformations}. The simplest example is obtained when
$X=\C^n$. Then one gets the prototypical situation considered by
Milnor \cite{M 68}.  

In general, to think about a flat morphism as a ``deformation'' means to
see it as a family of continuously varying fibers (in the sense that 
their dimension is locally constant, without blowing-up phenomena) 
and to concentrate on a particular fiber, the nearby ones being seen as 
``deformations'' of it. From such a family, 
one gets new families by rearranging the
fibers, that is, by {\em base change}. One is particularly interested
in the situations where there exist families which generate all other
families by such base changes. 
The following definition is a reformulation of \cite[Definition 1.8,
page 234]{GLS 07}:

\begin{definition} \label{miniv}
  \begin{enumerate}
      \item A deformation of $(X,x)$ is {\bf
          complete}\index{Complete deformation} if any other
        deformation is  
  obtainable from it  by a base-change.  

      \item A complete deformation $\psi$ of $(  X,x)$ is
  called {\bf versal}\index{Versal deformation} if for any other
  deformation over a base $(T,t)$ 
  and identification  
  of the induced deformation over a subgerm $(T',t)
  \hookrightarrow (T,t)$ with a pull-back from $\psi$, one may extend
  this identification with a pull-back from $\psi$ over all $(T,t)$. 

       \item A versal deformation is {\bf
    miniversal}\index{Miniversal deformation} if  
  the Zariski tangent space of its base $(S,s)$ has the smallest possible 
  dimension. 
  \end{enumerate}
\end{definition} 

When the miniversal deformation exists, its base space is unique {\em
  up to non-unique isomorphism} (only the tangent map to the 
isomorphism is unique). For this reason, one does not speak about a
{\em universal} deformation, and was coined the word ``miniversal'',
with the variant ``semi-universal''.  

In many references, versal deformations are defined as the complete
ones in the previous definition. Then is stated the theorem
that the base of a versal deformation is isomorphic to the product of
the base of a miniversal deformation and a {\em smooth} germ. But with
this weaker definition the result is
false. Indeed, starting from a complete deformation, by doing the
product of its base with {\em any} germ (not necessarily
smooth) and by taking the pull-back, we would get again a complete
deformation.  This shows  that a complete deformation is not necessarily
versal. Nevertheless, the theorem stated before is true if one uses the
previous definition of \emph{versality}. 

Not all germs admit versal deformations. But those with isolated singularity 
do admit, as was proved by  Schlessinger \cite{S 68} for \medskip{formal} deformations 
(that is, over spectra of formal analytic algebras), then by Grauert \cite{G 72} for holomorphic 
ones (an important point of this theorem being that one has to work with general 
analytic spaces, possibly non-reduced):

\begin{theorem} \label{exvers}
  Let $(X,x)$ be an isolated singularity. Then the miniversal 
  deformation  
  exists and is unique up   to (non-unique) isomorphism. 
\end{theorem}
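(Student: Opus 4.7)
The plan is to follow the classical two-step strategy of Schlessinger and Grauert: first construct a \emph{formal} miniversal deformation by an abstract functorial criterion, then promote it to a holomorphic one by a convergence argument.

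First, I would set up the deformation functor $\mathrm{Def}_{(X,x)}$ from the category of local Artinian $\C$-algebras with residue field $\C$ to Sets, sending $A$ to the set of isomorphism classes of flat deformations of $(X,x)$ over $\mathrm{Spec}(A)$ (where $\psi^{-1}(s) \cong (X,x)$ as in Definition \ref{deform}). Complete/versal/miniversal deformations in the formal category correspond, by standard dictionary, to smooth/surjective/minimal morphisms of pro-representing pairs. Then I would verify Schlessinger's conditions (H1)-(H4) for this functor. The key inputs are that the tangent space $T^1_{(X,x)}$, classifying first-order deformations, and the obstruction space $T^2_{(X,x)}$ are both finite-dimensional $\C$-vector spaces, which in turn rests on the fact that $(X,x)$ has an \emph{isolated} singularity (so the relevant cohomologies of the cotangent complex of $(X,x)$ are concentrated at the singular point and are finite-dimensional). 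Applying Schlessinger's theorem then yields a complete local Noetherian $\C$-algebra $\hat R$ and a formal miniversal deformation $\hat\psi \colon (\hat Y,\hat y)\to \mathrm{Spf}(\hat R)$.

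The hard part, and the real content of Grauert's theorem, is the step from formal to analytic: one must show that the formal family $\hat\psi$ is the completion of a genuine holomorphic deformation over a germ $(S,s)$ of analytic space with local ring $\mathcal{O}_{S,s}$ whose completion equals $\hat R$. Grauert's proof is a delicate Newton-type iterative construction: one writes down systems of analytic equations on appropriate Banach spaces of convergent power series, and shows convergence via careful norm estimates, in the spirit of the Cartan--Serre coherence machinery together with a Kodaira--Spencer style lift. It is essential here to allow $(S,s)$ to be non-reduced, since the obstructions in $T^2_{(X,x)}$ generically cut out a non-reduced subgerm of the tangent cone $T^1_{(X,x)}$.

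Finally, uniqueness up to (non-unique) isomorphism is a formal consequence of the definition of versality in Definition \ref{miniv}. Given two miniversal deformations $\psi_1,\psi_2$, versality of each produces base-change morphisms $\varphi_{12}\colon (S_1,s_1)\to (S_2,s_2)$ and $\varphi_{21}$ in the opposite direction; their compositions induce the identity on each tangent space by the minimality of dimension, hence are isomorphisms of analytic germs by the analytic inverse function theorem (applied to the smooth factor and combined with Nakayama on the singular factor). The fact that $\varphi_{12}$ itself is not unique — only its tangent map is — explains, as mentioned in the excerpt, why one speaks of ``miniversal'' rather than ``universal'' deformations.
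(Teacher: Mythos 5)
The paper does not actually prove this theorem: it quotes it as a combination of Schlessinger \cite{S 68} (formal deformations) and Grauert \cite{G 72} (convergent ones), which is precisely the two-step strategy you outline, so your proposal follows the intended route. Two remarks. First, you should invoke only Schlessinger's conditions (H1)--(H3): these are what produce a hull, i.e.\ a formal miniversal deformation, from the finite-dimensionality of $T^1_{(X,x)}$ (which is where the isolatedness of the singularity enters). Condition (H4) is the extra hypothesis under which the functor is pro-representable, i.e.\ the deformation is \emph{universal}; it typically fails for the deformation functor of a singularity (fibers have automorphisms that do not extend over the base), and indeed if it held the isomorphism in the statement would be unique, contradicting the very phenomenon the surrounding text is at pains to explain. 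Second, your formal-to-convergent step is a description of what Grauert does rather than a proof of it; that is unavoidable at this level of detail and is consistent with the paper, which defers entirely to \cite{G 72} --- the one substantive point you correctly retain is that the base germ must be allowed to be non-reduced. Your uniqueness argument is essentially right, but it is cleaner to say that the composite base-change morphism induces the identity on $T^1_{(X,x)}$ via Kodaira--Spencer, hence (by miniversality) the identity on the Zariski tangent space of the base, hence a surjective endomorphism of a Noetherian local ring, hence an automorphism; no decomposition into a ``smooth factor'' and a ``singular factor'' is needed, nor is one canonically available.
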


One may extend the notion of deformation by allowing bases of infinite dimension. Then 
even the germs with non-isolated singularity have versal deformations (see Hauser's 
papers \cite{H 83}, \cite{H 85}). 

In the sequel we will be interested in deformations with smooth generic fibers:

\begin{definition} \label{smooth}
A {\bf smoothing} of an isolated singularity $(X,x)$ is a $1$-parameter
deformation whose generic fibers are smooth. 
A {\bf smoothing component} of $(X,x)$ is an
irreducible component  
  of the reduced miniversal 
  base space  over which the generic  fibers are smooth. 
\end{definition}

Isolated complete intersection singularities have a miniversal deformation
$(Y,y) \stackrel{\psi}{\rightarrow} (S,s)$ such that both $Y$ and $S$
are smooth, therefore irreducible (see \cite{L 84}). 
In general, the {\em reduced miniversal base} $(S_{red},s)$ may be
reducible. The first example of this 
phenomenon was discovered by Pinkham 
\cite[Chapter 8]{P 74}:

\begin{proposition} \label{firstex} 
  The germ at the origin of the cone over the rational normal curve of
  degree $4$ in $\bP^4$ has a reduced miniversal base space with two
  components, both being smoothing ones. 
\end{proposition}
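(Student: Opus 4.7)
The plan is to follow Pinkham's original strategy from Chapter 8 of \cite{P 74}, which exploits the natural $\C^*$-action on the cone. Write $X \subset \C^5$ for the affine cone over $C_4$, cut out by the six $2\times 2$ minors of
$$M \;=\; \begin{pmatrix} x_0 & x_1 & x_2 & x_3 \\ x_1 & x_2 & x_3 & x_4 \end{pmatrix}.$$
Scaling all coordinates with weight one preserves the ideal $I$ of these minors, so Schlessinger's equivariant deformation theory applies and produces weight decompositions $T^1_X = \bigoplus_{\nu} T^1_X(\nu)$ and $T^2_X = \bigoplus_{\nu} T^2_X(\nu)$, together with an equivariant miniversal deformation built up weight by weight.

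The first step is to compute $T^1_X(\nu)$ for each $\nu$. Using the conormal sequence together with the Eagon--Northcott resolution of $I$ (whose first syzygy module has rank eight and is generated by Koszul-type relations among the minors), each piece $T^1_X(\nu)$ is identified with an explicit cohomological quantity on $\bP^1$ built from the normal bundle $N_{C_4/\bP^4}$. Since $C_4 = \bP^1$ and this normal bundle splits as a sum of line bundles of known degrees on $\bP^1$, a direct calculation yields the dimension of each weight piece. The outcome is that $T^1_X$ is supported in two distinct negative weights, producing two candidate first-order smoothing directions; by contrast $T^1_X(\nu) = 0$ for $\nu \geq 0$, as is automatic for a cone over a smooth projectively normal curve.

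Next, I would exhibit two concrete $1$-parameter smoothings realising these two weight classes. The first is the \emph{sweep-out smoothing}: perturb the six minors to those of $M + tN$ for a suitably chosen constant $2\times 4$ matrix $N$. For $t \neq 0$ the perturbed equations cut out a smooth affine surface, and the family is precisely Pinkham's sweeping of the projective cone $\overline{X} \subset \bP^5$ by a pencil of hyperplanes. The second smoothing uses the identification $X \simeq \C^2/(\Z/4)$ with the cyclic quotient $\tfrac{1}{4}(1,1)$: this is a $T$-singularity in the sense of Koll\'ar--Shepherd-Barron and admits a $\Q$-Gorenstein smoothing whose Milnor fibre is a rational homology ball. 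Its Kodaira--Spencer class lies in a strictly more negative weight of $T^1_X$ than that of the sweep-out, so the two first-order deformations are genuinely distinct.

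Finally, I would verify that each of these first-order directions lifts unobstructedly to a convergent family, by checking that $T^2_X$ vanishes in the relevant weights, and that the two directions cannot lie in a common irreducible component of the reduced miniversal base: their Kodaira--Spencer classes sit in different weights, and the reduced base inherits an equivariant stratification under which a single smooth component cannot carry tangent vectors of two distinct weights. Combined with Grauert's existence and uniqueness statement (Theorem~\ref{exvers}), this gives exactly the two smoothing components claimed. The main obstacle is the explicit equivariant computation of $T^2_X$ in the relevant weights; the algebra reduces to linear algebra on cohomology of line bundles over $\bP^1$, but is intricate because of the six generators and eight first syzygies of $I$, and because the second smoothing is more delicate to write in closed form than the sweep-out.
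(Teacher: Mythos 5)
The paper offers no proof of this statement---it is quoted from Pinkham \cite[Chapter 8]{P 74}---so your attempt must be judged against Pinkham's argument. Your overall architecture (exhibit the determinantal smoothing and the $\Q$-Gorenstein smoothing, then show they lie on different components) is the right one, but the mechanism you propose for separating the components rests on a false computation. For the cone over the rational normal quartic, $T^1_X$ is $4$-dimensional and concentrated \emph{entirely} in weight $-1$: the determinantal deformation perturbs the entries of $M$ by constants (quadrics perturbed by $t\cdot(\text{linear})$ with $t$ of weight $1$), and the $\Q$-Gorenstein smoothing, obtained by deforming $XY-Z^2=0$ to $XY-Z^2=t$ on the index-one cover $\frac12(1,1)$ and taking invariants, likewise perturbs the quadrics by $t\cdot(\text{linear})$ plus higher order. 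Both Kodaira--Spencer classes sit in $T^1_X(-1)$, so there is no weight separation to exploit. (Even if there were, a smooth $\C^*$-invariant component may carry tangent vectors of several weights---think of the smooth versal base of a quasi-homogeneous hypersurface singularity---so the ``equivariant stratification'' step would not be valid as stated.)

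The second fatal point is your plan to ``check that $T^2_X$ vanishes in the relevant weights.'' Since $T^1_X=T^1_X(-1)$, the quadratic obstructions land in $T^2_X(-2)$; if that space (and the more negative weights) vanished, the miniversal base would be smooth and irreducible, contradicting the very statement you are proving. In fact $T^2_X\neq 0$ and the obstruction map is nonzero: Pinkham's computation produces explicit quadratic equations on $T^1\cong\C^4$ cutting out exactly the union of a $3$-plane and a transversal line, which is how one gets the upper bound of two components---a bound your argument never addresses. The clean way to see that your two explicit families lie on \emph{different} components (and the one consistent with the theme of this paper) is topological: the Milnor fiber is constant along a smoothing component, and the determinantal (Artin) smoothing has Milnor fiber with $b_2=1$ (a disc bundle over $S^2$ of Euler number $-4$, or equivalently a quartic scroll minus a hyperplane section), whereas the $\Q$-Gorenstein smoothing has Milnor fiber $\bP^2$ minus a smooth conic, a rational homology ball with $b_2=0$. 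Replacing your weight argument by this Milnor-fiber comparison, and adding the explicit computation of the obstruction equations (or a citation to it) to cap the number of components at two, would repair the proof.
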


Not all isolated singularities are smoothable. The most extreme case
is attained with {\em rigid}\index{Rigid singularities} singularities,
which are not deformable 
at all in a non-trivial way. For example, quotient singularities of
dimension $\geq 3$ are rigid (Schlessinger \cite{S  71}). 

In \cite{P 08} we proved a purely
topological obstruction to smoothability for singularities of
dimension $\geq 3$. In dimension $2$ no such criterion is known 
for {\em all} normal singularities. But there exist
such obstructions for {\em Gorenstein} normal surface singularities 
as a consequence of the following theorem of Steenbrink \cite{S 83}:

\begin{theorem} \label{obsgor} 
    Let $(X,x)$ be a Gorenstein normal surface singularity. If it is
    smoothable, then:
  \begin{equation} \label{mumin}
      \mu_- = 10 \  p_g(X,x) - b_1(\partial(X,x)) + (Z_K^2 + |I|).
  \end{equation}
\end{theorem}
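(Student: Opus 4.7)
The strategy is to compute separately the three parts of the intersection form on $H^{2}(F;\R)$, where $F$ denotes the Milnor fiber of the smoothing: its nullity $\mu_{0}$, its positive index $\mu_{+}$, and the total rank $\mu=\mu_{0}+\mu_{+}+\mu_{-}$. Obtaining $\mu_{-}$ is then a matter of subtraction.

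For the nullity, I would apply the long exact sequence of the pair $(F,\partial F)$ together with Lefschetz duality. The fact that $b_{1}(F)=0$ for smoothings of normal Gorenstein surface singularities (a theorem of Greuel--Steenbrink) ensures that the kernel of $H^{2}(F;\R)\to H^{2}(\partial F;\R)$ coincides with the radical of the intersection pairing; its dimension equals $b_{1}(\partial F)=b_{1}(\partial(X,x))$, yielding
$$\mu_{0}=b_{1}(\partial(X,x)).$$

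For the total Milnor number I would invoke Laufer's formula, valid in the Gorenstein setting, asserting $\mu=12\,p_{g}(X,x)+Z_{K}^{2}+|I|$. This can be obtained by equating the topological Euler characteristic $\chi(F)=\mu+1-b_1(F)$ with that of a minimal good resolution $\pi\colon\tilde X\to X$ across the cobordism between them, and using on the holomorphic side Grauert--Riemenschneider vanishing, which yields $\chi(\mathcal O_{\tilde X})-\chi(\mathcal O_{X,x})=p_{g}(X,x)$, together with Noether's formula applied to a smooth compactification of $\tilde X$ in order to express $K_{\tilde X}^{2}+c_{2}(\tilde X)$ in terms of $p_{g}$. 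The combinatorics of the snc exceptional divisor then packages the relevant contribution as $Z_{K}^{2}+|I|$.

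For the positive index, I would invoke Steenbrink's mixed Hodge structure on the vanishing cohomology $H^{2}(F;\Q)$. The intersection form is polarized by this structure: it is positive definite on the $(2,0)+(0,2)$ subspace, while on the $(1,1)$ part it has no definite sign. The Gorenstein hypothesis forces the spectrum of the smoothing to be symmetric around the middle and, through Steenbrink's identification of the $(2,0)$-summand of the vanishing cohomology with the space of $2$-forms on $\tilde X$ with suitable growth along $Z_{K}$, yields $h^{2,0}=p_{g}(X,x)$. A cancellation in the $(1,1)$-part then gives $\mu_{+}=2\,p_{g}(X,x)$. Substituting everything into $\mu_{-}=\mu-\mu_{0}-\mu_{+}$ produces the claimed identity. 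The technical heart of the argument is unquestionably this last step: it is precisely the Gorenstein condition which enables the identification of $h^{2,0}$ with $p_{g}$, via Steenbrink's detailed analysis of the way the dualizing sheaf restricts to the nearby fiber, and making the weight and Hodge conventions line up with the topological quantities is the delicate part of the computation.
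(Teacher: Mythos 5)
The paper itself does not prove this statement---it is quoted directly as a theorem of Steenbrink \cite{S 83}---so there is no internal argument to compare against. Your outline does follow the standard route to Steenbrink's formula: split $\mu=\mu_0+\mu_++\mu_-$, get $\mu_0$ from the long exact sequence of $(F,\partial F)$ together with $b_1(F)=0$ (Greuel--Steenbrink), get the non-negative part from the mixed Hodge structure on the vanishing cohomology, and get the total rank $\mu$ from a Laufer-type formula; then subtract. The identification $\mu_0=b_1(\partial(X,x))$ is correct as you state it.

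However, two of your three ingredients are misstated, each off by exactly $b_1(\partial(X,x))$, and your final answer comes out right only because the two errors cancel. Steenbrink's Hodge-theoretic statement is $\mu_0+\mu_+=2\,p_g(X,x)$: it is the whole non-negative part, not $\mu_+$ alone, that equals $2p_g$. For the simple elliptic hypersurface $T_{2,3,6}$ one has $p_g=1$, $\mu_0=2$ and $\mu_+=0$, which contradicts $\mu_+=2p_g$. Likewise the Laufer--Steenbrink formula for the rank reads $\mu=12\,p_g(X,x)+Z_K^2+|I|-b_1(\partial(X,x))$; the missing term arises because in the Noether-formula comparison the topological Euler characteristic of the resolution equals $\chi_{top}(E)=|I|+1-b_1(\partial(X,x))$ for an snc exceptional divisor, and the $-b_1$ does not go away (again for $T_{2,3,6}$: $\mu=10$, whereas your formula gives $12$). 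With the corrected statements one gets $\mu_-=\bigl(12p_g+Z_K^2+|I|-b_1\bigr)-b_1-\bigl(2p_g-b_1\bigr)=10p_g-b_1+Z_K^2+|I|$ as claimed. A last remark on attribution: $\mu_0+\mu_+=2p_g$ holds for smoothings of \emph{all} normal surface singularities; the Gorenstein hypothesis is what makes the formula for $\mu$ available (one needs the canonical class of the smoothing to be trivial off the singular point for $Z_K^2$ to be the relevant well-defined quantity), so the place where you locate the use of the Gorenstein condition is also slightly off.
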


In the preceding formula, $\mu_-$ denotes the negative part of the index of
    the intersection form on the second homology group of \emph{any} Milnor
    fiber (see Theorem \ref{topdeform} below) 
    and $b_1(\partial(X,x))$ denotes the first Betti
number of the boundary of $(X,x)$. It may be computed from any
snc resolution with exceptional divisor $E=\sum_{i \in I}
E_i$ as:
 $$b_1(\partial(X,x))= b_1(\Gamma) + 2 \sum_{i \in I} p_i,$$
 where $p_i$ denotes the genus of $E_i$ and $\Gamma$ denotes the 
 dual graph of $E$. 
The term $Z_K^2 + |I|$ may also be computed using any snc 
resolution, and is again a topological invariant of the singularity. 

The previous theorem implies that the expression in the right-hand
side of (\ref{mumin}) is $\geq 0$, which gives non-trivial
obstructions on the topology of smoothable normal Gorenstein
singularities. For example, it shows that:

\begin{proposition} \label{sellsmooth} 
  Among simple elliptic singularities,
   the smoothable ones have minimal resolutions whose 
    exceptional divisor is an elliptic curve with self-intersection $\in
   \{-9,-8,...,-1\}$. 
\end{proposition}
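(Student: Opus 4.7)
The plan is to apply Steenbrink's formula from Theorem \ref{obsgor} to a simple elliptic singularity $(S,0)$, using as $(X,x)$ the singularity itself, which is Gorenstein (as stated in the paragraph following Definition \ref{simplell}). The key is to explicitly compute every quantity appearing on the right-hand side of (\ref{mumin}) in terms of the self-intersection $-n$ of the exceptional elliptic curve $E$ of the minimal resolution $\pi:(\Sigma,E)\to(S,0)$, and then to use $\mu_-\geq 0$.

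First I would read off the easy invariants. Since $(S,0)$ is simple elliptic, it is minimally elliptic (characterization (\ref{motiv}) of Theorem \ref{minel} applies trivially because $E$ is irreducible), hence by point (4) of the same theorem one has $p_g(S,0)=1$. The exceptional locus is a single smooth curve, so $|I|=1$ and the dual graph $\Gamma$ consists of one vertex with no edges, giving $b_1(\Gamma)=0$; combined with the unique genus contribution $p_1=g(E)=1$, the formula recalled after Theorem \ref{obsgor} yields
\begin{equation*}
b_1(\partial(S,0)) \;=\; 0 + 2\cdot 1 \;=\; 2.
\end{equation*}

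Next I would determine $Z_K$. Writing $Z_K=aE$ with $a\in\Q$, the defining relation $Z_K\cdot E=-K_\Sigma\cdot E$ combined with the adjunction formula $K_\Sigma\cdot E + E^2 = 2g(E)-2 = 0$ gives $K_\Sigma\cdot E = -E^2$ and hence $aE^2 = -(-E^2) = E^2$, so $a=1$ and $Z_K=E$. Setting $E^2=-n$ with $n\geq 1$ (the self-intersection is negative because $\pi$ contracts $E$ to a point of a normal surface), we obtain
\begin{equation*}
Z_K^2 + |I| \;=\; -n + 1.
\end{equation*}

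Plugging everything into Steenbrink's identity (\ref{mumin}) gives
\begin{equation*}
\mu_- \;=\; 10\cdot 1 - 2 + (-n+1) \;=\; 9 - n.
\end{equation*}
Since $\mu_-$ is the negative part of an intersection form it satisfies $\mu_-\geq 0$, which forces $n\leq 9$. Combined with $n\geq 1$, this yields $E^2\in\{-9,-8,\ldots,-1\}$, as claimed. The only step that requires genuine input is the identification $Z_K=E$ via adjunction; everything else is bookkeeping built into the definitions and Theorem \ref{obsgor}, so I do not expect a serious obstacle.
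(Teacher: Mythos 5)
Your proof is correct and follows exactly the route the paper intends: the paper derives this proposition as an immediate consequence of the non-negativity of the right-hand side of Steenbrink's formula (\ref{mumin}), and your computation ($p_g=1$, $b_1(\partial(S,0))=2$, $Z_K=E$ by adjunction, hence $\mu_-=9-n\geq 0$) simply fills in the details the paper leaves implicit. The identification $Z_K=E$ and the bound $n\geq 1$ from negativity of the exceptional intersection form are both handled properly, so there is nothing to add.
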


Proposition \ref{sellsmooth} 
has been proved first in another way by Pinkham \cite[Chapter 7]{P 74}.

\medskip
 
Let us look now at the topology of the generic fibers above a
smoothing component. We want to localize the study of the family in
the same way as Milnor localized the study of a function on $\C^n$
near a singular point. This is possible (see Looijenga \cite{L 84}):

\begin{theorem} \label{topdeform}
Let $(X,x)$ be an isolated singularity. Let $(Y,y)
\stackrel{\psi}{\rightarrow} (S,s)$ be a miniversal deformation of
it.  There exist (Milnor) representatives $Y_{red}$ and $S_{red}$ 
of the reduced total and base spaces of $\psi$ such that the 
restriction $\psi:\partial Y_{red} \cap\psi^{-1}(S_{red}) 
\rightarrow S_{red}$ is a trivial $C^{\infty}$-fibration. 
Moreover, one may choose those representatives such that over each
smoothing component $S_i$, one gets a locally trivial $C^{\infty}$-fibration 
$\psi: Y_{red} \cap\psi^{-1}(S_i) \rightarrow S_i$
outside a proper analytic subset.
 \end{theorem}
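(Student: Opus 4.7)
The plan is to mimic Milnor's localization argument in families, using parametric transversality to the boundary sphere together with Ehresmann's fibration theorem. First I would fix an embedding of a representative of $(X,x)$ into $(\C^n, 0)$ and invoke Definition \ref{miln} to choose a Milnor radius $r_0 > 0$. Since the miniversal deformation of an isolated singularity can be realized with its total space embedded as $(Y,y) \hookrightarrow (\C^n \times S, (0,s))$ with $\psi$ the projection onto the second factor, every nearby fiber $Y_t := \psi^{-1}(t)$ sits inside $\C^n$ as a small perturbation of $X$. Parametric transversality and the compactness of $\mathbb{S}^{2n-1}(r_0)$ then ensure that, after shrinking to a suitably small representative $S_{red}$ of the reduced base, the sphere $\mathbb{S}^{2n-1}(r_0)$ stays transverse to the reduction of every fiber $Y_t$ for $t \in S_{red}$.

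Setting $Y_{red}$ to be the intersection of the reduction of $Y$ with $\overline{\mathbb{B}^{2n}(r_0)} \times S_{red}$, the map
\[
   \psi : \partial Y_{red} \cap \psi^{-1}(S_{red}) \longrightarrow S_{red}
\]
is proper and, by the arranged transversality, a submersion along the fibers (in the appropriate stratified sense, since $S_{red}$ may itself be singular or reducible). Applying a stratified version of Ehresmann's theorem — for instance Thom's first isotopy lemma to a Whitney stratification of the source that is compatible with $\psi$ — one gets a locally trivial $C^{\infty}$-fibration; contracting $S_{red}$ to a sufficiently small, hence contractible, polydisk-like representative upgrades local triviality to global triviality, yielding the first assertion.

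For the second assertion, fix a smoothing component $S_i$. Because the generic fibers of $\psi|_{S_i}$ are smooth, the analytic set $\mathrm{Crit}(\psi) \subset Y_{red} \cap \psi^{-1}(S_i)$ of singular points of fibers is proper, and so is its image $\Delta_i \subsetneq S_i$ — the discriminant. Over $S_i \setminus \Delta_i$ the restriction of $\psi$ is a proper submersion (proper thanks to having enclosed everything in the Milnor ball, and with boundary controlled by the first part), so Ehresmann's theorem again gives a locally trivial $C^{\infty}$-fibration outside this proper analytic subset.

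The main technical obstacle is coordinating three nested shrinkings simultaneously: the Milnor ball $\mathbb{B}^{2n}(r_0)$, the base representative $S_{red}$, and the total representative $Y_{red}$, so that uniform transversality to the bounding sphere persists while every stratum of the discriminant still meets $S_{red}$. The cleanest way to navigate this is to first endow $(Y_{red}, \partial Y_{red})$ with a Whitney stratification for which $\psi$ is a stratified map (using the existence of Whitney stratifications for analytic morphisms), and only then to shrink the three data until Thom's first isotopy lemma applies on each stratum. This unified approach delivers both conclusions of the theorem at once.
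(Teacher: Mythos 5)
The paper itself gives no proof of Theorem \ref{topdeform}: it is quoted from Looijenga \cite{L 84}, so your sketch has to be measured against the standard argument there. Your overall strategy --- embed the total space in $\C^n \times S$, use compactness and openness of transversality to make a fixed Milnor sphere transverse to all nearby fibres, then invoke fibration theorems --- is the right skeleton, and your treatment of the second assertion (critical locus, discriminant $\Delta_i \subsetneq S_i$, Ehresmann over the complement) is essentially correct, up to the harmless point that the ``proper analytic subset'' must also absorb the singular locus of $S_i$ and its intersection with the other components, so that both source and target of the restricted map are genuinely smooth manifolds before Ehresmann is applied.

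There is, however, a real gap in your argument for the first assertion. The reduced miniversal base $S_{red}$ is in general singular and reducible (see Proposition \ref{firstex}), and Thom's first isotopy lemma applied to a Whitney stratification of the source compatible with $\psi$ only yields \emph{local topological} triviality over each \emph{stratum} of an induced stratification of $S_{red}$; it does not produce a trivialization over all of $S_{red}$, and it produces homeomorphisms rather than the asserted $C^{\infty}$ trivialization. The ingredient you are missing is the rigidity of smooth germs: at any point $z$ of the total space where the fibre $\psi^{-1}(\psi(z))$ is smooth, the flat map $\psi$ is locally \emph{analytically} a product $U \times (S,\psi(z)) \to (S,\psi(z))$. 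After shrinking, every point of $\partial Y_{red} \cap \psi^{-1}(S_{red})$ is such a point, because the locus of singular points of the fibres meets the central fibre only at $x$ and hence stays inside a strictly smaller ball. The boundary family is therefore a proper map which is locally a product over its (arbitrary, possibly singular) base; gluing these local product charts along the compact fibres gives local triviality over $S_{red}$ in the $C^{\infty}$ sense, and passing to a contractible representative of the base upgrades this to global triviality. This replaces your stratified-Ehresmann step and is what actually delivers the first statement; the transversality set-up you describe is still needed, but only to guarantee that the sphere of radius $r_0$ cuts every nearby fibre cleanly along its smooth locus.
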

 
Hence, for each smoothing component $S_i$,  the oriented diffeomorphism type  
of the oriented manifold with boundary  
 $(\pi^{-1}(s)\cap Y_{red},\pi^{-1}(s)\cap \partial Y_{red})$ does not depend 
 on the choice of the generic element   $s\in S_i$: it
 is called the {\bf Milnor fiber} of that
 component. Moreover,  its boundary   is canonically 
identified with the boundary of $(X,x)$ \emph{up to isotopy}.  
In particular, the Milnor fiber of a  
smoothing component is diffeomorphic to a Stein filling of the contact
boundary ($\partial(X,x), \xi(X,x))$.   

Greuel and Steenbrink \cite{GS 83} proved 
 the following topological restriction on the Milnor fibers 
of \emph{normal} isolated singularities (of any dimension):

\begin{theorem} \label{vanish}
   Let $(X,x)$ be a normal isolated singularity. Then all its Milnor fibers 
   have vanishing first Betti number. 
\end{theorem}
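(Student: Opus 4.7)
The plan is to combine the fibration structure of the smoothing with Steenbrink's limit mixed Hodge structure, using normality of the central fiber in a crucial way.

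First, I would fix a good representative $\psi : Y \to \Delta$ of the smoothing as in Theorem \ref{topdeform}: $Y$ is contractible, $X = \psi^{-1}(0)$ is a Milnor representative of $(X,x)$, and the restriction $\psi : Y^* \to \Delta^*$, with $Y^* := Y \setminus X$, is a locally trivial $C^{\infty}$-fibration with fiber the Milnor fiber $F$. Applying the Leray spectral sequence to this fibration over $\Delta^* \simeq S^1$, which collapses at $E_2$ for dimension reasons, yields a short exact sequence
\begin{equation*}
0 \to H^0(F;\Q)_T \to H^1(Y^*;\Q) \to H^1(F;\Q)^T \to 0,
\end{equation*}
where $T$ is the geometric monodromy, and $H^0(F;\Q)_T \cong \Q$ since $F$ is connected.

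Second, I would compute $H^1(Y^*;\Q)$ from the long exact sequence of the pair $(Y,Y^*)$: the contractibility of $Y$ gives $H^1(Y^*;\Q) \cong H^2(Y,Y^*;\Q)$. Here normality enters for the first time: normality of $(X,x)$ forces $X$ to be locally irreducible at $x$, hence globally irreducible as a small Milnor representative, and $X$ is the zero set of a single holomorphic function on $Y$, so a Thom-type argument identifies $H^2(Y,Y^*;\Q)$ with $H^0(X;\Q) \cong \Q$. Comparing with the previous sequence yields $H^1(F;\Q)^T = 0$.

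Third, I would upgrade the vanishing of the $T$-invariants to the vanishing of the full group by invoking Steenbrink's limit mixed Hodge structure on $H^1(F)$: the weights of this MHS lie in $\{0,1,2\}$, the monodromy $T$ is quasi-unipotent, and the normality of the central fiber, used through local-cohomology duality at the isolated singularity $(Y,y)$, forces $T$ to act trivially on each weight-graded piece, so the vanishing of the invariant part propagates to all of $H^1(F;\Q)$. The main obstacle is precisely this last step: whereas the topological computations in Steps 1--2 are direct, identifying $H^1(F;\Q)^T$ with $H^1(F;\Q)$ genuinely requires the interaction between the limit MHS and normality. An alternative would be to bypass the Wang sequence and apply the Clemens--Schmid exact sequence to a semistable reduction of $\psi$, deducing $H^1(F;\Q) = 0$ directly from the vanishing of the low-degree cohomology of the contractible total space combined with the weight bounds coming from Hodge structures on the components of the reduced central fiber after resolution.
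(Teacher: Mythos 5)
The paper does not actually prove this statement: Theorem \ref{vanish} is quoted from Greuel and Steenbrink \cite{GS 83}, so the only thing to compare your sketch with is their argument, whose broad strategy (limit mixed Hodge structure on $H^1(F)$ with weights in $\{0,1,2\}$, normality entering through the geometry of the degeneration) your Step 3 does echo. But as written the proposal has two genuine gaps, and the first is refuted by the examples constructed in this very paper. In Step 2 you justify $H^2(Y,Y^*;\Q)\cong H^0(X;\Q)\cong\Q$ by irreducibility of the central fiber plus ``a Thom-type argument''. The Thom isomorphism for the divisor $X\subset Y$ is available only where both spaces are smooth; at the origin the local cohomology sequence for $\{y\}\subset X\subset Y$ inserts the terms $\tilde H^{1}(\partial(Y,y))$ and $\tilde H^{2}(\partial(Y,y))$, which you do not control. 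This is not a pedantic point: the germ $(C_B,O)$ of Section \ref{sec:sell} is irreducible, is cut out of the normal $3$-fold cone over the ruled surface by a single linear form, and has connected Milnor fiber, so your Steps 1--2 apply to it verbatim; yet the monodromy of that quasi-homogeneous smoothing is isotopic to the identity and $F$ is a disc bundle over an elliptic curve, so $Y^{*}$ is homotopy equivalent to $S^{1}\times F$ and $H^{1}(Y^{*};\Q)\cong\Q^{3}$, not $\Q$ --- the discrepancy being exactly $\tilde H^{1}$ of the link of the total space, which is $\Q^{2}$ here. So the identification in Step 2 must use normality of the central fiber itself, not mere irreducibility, and in fact it carries essentially the whole content of the theorem rather than being a preliminary reduction.

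Step 3 has the same character: passing from $H^1(F;\Q)^T=0$ to $H^1(F;\Q)=0$ requires $T$ to be \emph{unipotent} on $H^1(F)$, whereas the monodromy theorem gives only quasi-unipotency, and the assertion that ``normality forces $T$ to act trivially on each weight-graded piece'' is precisely a statement that needs a proof: on the pure weight-one piece the finite-order part of $T$ has no a priori reason to be trivial, and this is exactly where Greuel and Steenbrink do the real Hodge-theoretic work (resolution of the total space, the weight spectral sequence, and coherent vanishing statements that use normality). Invoking ``local-cohomology duality at $(Y,y)$'' is not an argument. In summary, the skeleton (Wang sequence, contractibility of $Y$, limit MHS) is reasonable and consistent with the cited proof, but both load-bearing steps are asserted rather than established, and the first one, at the level of generality at which you justify it, is contradicted by Remark \ref{nonorm}.
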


This is not true for non-normal isolated surface singularities, as may be seen 
for instance from the examples we give in the last section (see Remark 
\ref{nonorm}).

For singularities which are not complete intersections, 
it is in general difficult even to construct non-trivial 
deformations or to decide if there exist smoothings. There is nevertheless 
a general technique of construction of smoothings, applicable to 
\emph{germs of affine cones at their vertices}. Next section is dedicated to it.

\bigskip

\section{Sweeping out the cone with hyperplane sections} 
\label{sec:sweepcone}

In this section we recall Pinkham's method of construction of smoothings 
by ``\emph{sweeping out the cone with hyperplane 
sections}''. It may be applied to the germs of affine cones at their vertices. 
The reader may follow the explanations on Figure \ref{fig:Sweep}.

\begin{figure}[h!] 
\vspace*{6mm}
\labellist \small\hair 2pt 
\pinlabel{$A$} at 124 383
\pinlabel{$B$} at 165 360
\pinlabel{$H$} at 250 403
\pinlabel{$O$} at 182 150
\pinlabel{$C_A$} at 268 262
\pinlabel{$C_B$} at 257 160
\pinlabel{$C_H$} at 160 27
\pinlabel{$\bP(V)$} at 32 300
\pinlabel{$V$} at 32 50
\pinlabel{$W$} at 350 200

\endlabellist 
\centering 
\includegraphics[scale=0.50]{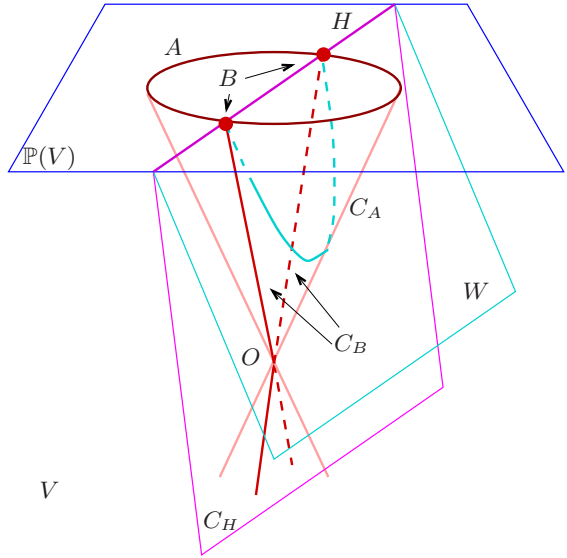} 
\caption{Sweeping the cone with hyperplane sections} 
\label{fig:Sweep}
\end{figure}

\medskip

Let $V$ be a complex vector space, whose projectivisation is denoted $\bP(V)$: 
set-theoretically, it consists of the lines of $V$. 
More generally, 
we define the \emph{projectivisation} $\bP(\mathcal{V})$ of a 
vector bundle $\mathcal{V}$ as the set of lines contained in the various 
fibers of the bundle. This notion will be used in the next section (see Remark 
\ref{dualproj}).

Let $A$ be a smooth subvariety of $\bP(V)$. Denote by $C_A 
\hookrightarrow V$ the \emph{affine cone} over it, and by 
$\overline{C_A} \hookrightarrow \overline{V}$ the associated \emph{projective cone}. 
Here $\overline{V}$ denotes the projective space of the same dimension as $V$, 
obtained by adjoining $\bP(V)$ to $V$ as hyperplane at infinity. That is:
   $$\overline{V} = \bP(V \oplus \C) = V \cup \bP(V).$$
The projective cone $\overline{C_A} = C_A \cup A$ is the Zariski closure of $C_A$ in 
 $\overline{V}$. The \emph{vertex} of either cone is the origin $O$ of $V$. 
 
 Assume now that $H\hookrightarrow \bP(V)$ is a projective hyperplane which intersects 
$ A$ \emph{transversally}. Denote by:
  $$B := H \cap A$$
 the corresponding hyperplane section of $A$. The affine cone $C_H$ over $H$ is 
 the linear hyperplane of $V$ whose projectivisation is $H$. 
 The associated projective cone $\overline{C_H} \hookrightarrow \overline{V}$ 
 is a projective hyperplane of $\overline{V}$. 
 
 Let $L$ be the pencil of hyperplanes of $\overline{V}$ generated by $\bP(V)$ 
 and $\overline{C_H}$. That is, it is the pencil of hyperplanes of $\overline{V}$ 
 passing through the ``axis'' $H$. In restriction to $V$, it consists 
 in the levels of any linear 
 form $f : V \to \C$ whose kernel is $C_H$. The $0$-locus of $f|_{C_A}$ is 
 the affine cone $C_B$ over $B$. 

As an immediate consequence of the fact that $H$ intersects $A$ 
transversally, we see that $C_B$ has an isolated singularity at $0$ 
and that all the non-zero levels of $f|_{C_A}$ are smooth. This shows that:

\begin{lemma}
  The map $f|_{C_A} : C_A \to \C$ gives  a smoothing of the isolated singularity 
  $(C_B, O)$. 
\end{lemma}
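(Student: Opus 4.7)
The plan is to verify that the three ingredients in Definition \ref{smooth} (via Definition \ref{deform}) hold: that $f|_{C_A}$ is flat over $\C$, that its special fiber is $(C_B,O)$, and that its generic fibers are smooth. Beforehand one must also confirm that $(C_B,O)$ really is an isolated singularity, since the lemma speaks of ``the isolated singularity $(C_B,O)$''.

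First I would treat the isolated singularity statement. By the transversality hypothesis, $B = A \cap H$ is a smooth subvariety of $\bP(V)$. The complement $C_B \setminus \{O\}$ carries a free $\C^*$-action by scaling, and the orbit map identifies it with the complement of the zero section in the tautological line bundle over $B$; hence it is smooth. Consequently, the only possible singular point of $C_B$ is the vertex $O$. The identification of the special fiber is equally direct: since $\ker f = C_H$ and both $C_A$ and $C_H$ are cones in $V$, one has
\[
(f|_{C_A})^{-1}(0) \;=\; C_A \cap C_H \;=\; \text{cone over } (A \cap H) \;=\; C_B .
\]

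For flatness, since $A$ is smooth (and assumed connected, so irreducible), its affine cone $C_A$ is irreducible and reduced. Because $H$ meets $A$ transversally, $A$ is not contained in $H$, so $f$ does not vanish identically on $C_A$; equivalently, $f|_{C_A}$ is a non-zero-divisor on the germ $(C_A,O)$. By the criterion recalled just after Definition \ref{deform}, this is precisely the condition for the map $f|_{C_A} : (C_A,O) \to (\C,0)$ to be flat, hence a $1$-parameter deformation with special fiber $(C_B,O)$.

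The main point is the smoothness of the generic fibers, and I expect this to be the step requiring the most care. The key observation is that $f$ is linear, so for every $\lambda \in \C^*$ one has $f(\lambda x) = \lambda f(x)$; thus scaling by $t'/t$ gives a biholomorphism $f^{-1}(t) \cap C_A \xrightarrow{\;\sim\;} f^{-1}(t') \cap C_A$ for all nonzero $t,t'$. It therefore suffices to exhibit a single smooth nonzero fiber. For this I would use the projection $\pi : C_A \setminus \{O\} \to A$ sending each point to the line on which it lies: it is a principal $\C^*$-bundle. Restricted to any line $\pi^{-1}(v) \cup \{O\}$, the function $f$ is a linear form, vanishing identically precisely when $v \in A \cap H = B$. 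Hence each line with $v \in A \setminus B$ meets the affine hyperplane $f^{-1}(1)$ in exactly one point, and this assignment defines a biholomorphism $f^{-1}(1) \cap C_A \xrightarrow{\;\sim\;} A \setminus B$ (holomorphy is checked by a local section of $\pi$, along which the inverse is given by dividing by the nowhere-vanishing holomorphic function $f \circ s$). Since $A \setminus B$ is open in the smooth variety $A$, it is smooth, and hence so are all the nonzero fibers of $f|_{C_A}$. Combined with the previous steps, this establishes that $f|_{C_A} : C_A \to \C$ is a smoothing of $(C_B,O)$.
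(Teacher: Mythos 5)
Your proof is correct and follows the same line as the paper, which simply asserts the lemma as an immediate consequence of transversality (isolated singularity of $C_B$ at $O$ plus smoothness of the nonzero levels of $f|_{C_A}$). Your extra details — flatness via the non-zero-divisor criterion and the identification of a nonzero fiber with $A \setminus B$ through the $\C^*$-action — are sound and in fact anticipate exactly the argument the paper gives right afterwards for Proposition \ref{complsect}.
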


Such are the smoothings obtained by ``\emph{sweeping out the cone with hyperplane 
sections}'', in the words of Pinkham \cite[Page 46]{P 74}. 
It is probably the easiest way to construct smoothings, which 
explains why a drawing similar to the one we include here 
was represented on the cover of Stevens' book \cite{S 03}. 

Since the complement $C_A \setminus O$ of the vertex in the cone $C_A$ is homogeneous 
under the natural $\C^*$-action by scalar multiplication on $V$, the Milnor fibers 
of $f|_{C_A} : (C_A, O) \to (\C, 0)$ are diffeomorphic to the global (affine) fibers 
of  $f|_{C_A} : C_A \to \C$. Those fibers are the complements $(W  \cap \overline{C_A}) 
\setminus B$, for the members $W$ of the pencil $L$ different from $\overline{C_H}$ 
and $\bP(V)$. But the only member of this pencil which intersects 
$\overline{C_A}$ non-transversally is $\overline{C_H}$, which shows that 
the pair $(W  \cap \overline{C_A}, B)$ is diffeomorphic to 
$(\bP(V) \cap \overline{C_A}, B)= (A, B)$. Therefore:

\begin{proposition} \label{complsect}
    The Milnor fibers of the smoothing $f|_{C_A} : (C_A, O) \to (\C, 0)$ of the 
    singularity $(C_B, O)$ are diffeomorphic to 
    the affine subvariety $A \setminus B$ of the affine space $\bP(V) \setminus H$. 
\end{proposition}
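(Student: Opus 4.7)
My plan is to exploit the conical structure of $C_A$ together with the pencil $L$ to reduce the Milnor fiber to a projective intersection minus a divisor. The linear form $f$ is homogeneous of degree one with respect to the natural $\C^*$-action on $V$ that scales $C_A$, and the vertex $O$ is the only fixed point. So the first step is to observe that the scaling action identifies any Milnor fiber $f^{-1}(t) \cap C_A \cap \mathbb{B}^{2n}(r_0)$, for $t$ small, with the whole global affine fiber $f^{-1}(t) \cap C_A$; concretely, scaling by $\lambda \in \C^*$ sends $f^{-1}(t)$ diffeomorphically to $f^{-1}(\lambda t)$ while preserving $C_A$. It therefore suffices to describe a single generic global fiber of $f|_{C_A}$ up to diffeomorphism.

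Next, I would pass to the projective closure $\overline{C_A} \hookrightarrow \overline{V}$ and use the pencil $L$. For $W \in L \setminus \{\overline{C_H}, \bP(V)\}$, the affine hyperplane $W \cap V$ is exactly a non-zero level set of $f$, and $W$ meets the hyperplane at infinity $\bP(V)$ along $H$, so $W \cap \overline{C_A}$ meets $A = \bP(V) \cap \overline{C_A}$ precisely along $B$. Therefore the global affine fiber equals $(W \cap \overline{C_A}) \setminus B$, and the question becomes: as $W$ varies through $L$, what is the diffeomorphism type of the pair $(W \cap \overline{C_A}, B)$? Since all members of $L$ contain the axis $H$, they all cut out the same divisor $B$ on $A$, and this $B$ is the base locus of the restricted pencil $L|_{\overline{C_A}}$.

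The final step is to argue that this restricted pencil is a locally trivial family of pairs away from its single degenerate member $\overline{C_H}$. The transversality of $H$ and $A$ in $\bP(V)$ guarantees smoothness of $\overline{C_A}$ along $B$ and transversality of every $W \in L$ to $\overline{C_A}$ along $B$; away from $B$, the only member tangent to $\overline{C_A}$ (and the only one passing through the vertex $O$) is $\overline{C_H}$, so on $L \setminus \{\overline{C_H}\}$ the total space of the family is smooth and the projection to $L$ is a proper submersion. Ehresmann's theorem applied to this family (keeping $B$ as a common subvariety) yields a diffeomorphism of pairs $(W \cap \overline{C_A}, B) \cong (\bP(V) \cap \overline{C_A}, B) = (A, B)$, and removing $B$ gives the desired diffeomorphism to $A \setminus B \subset \bP(V) \setminus H$.

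The main obstacle I anticipate is precisely the verification needed to apply Ehresmann cleanly: one must check that no member of $L$ other than $\overline{C_H}$ is tangent to $\overline{C_A}$ somewhere outside $B$, and that the singularities of $\overline{C_A}$ (at least the vertex $O$) are avoided by the relevant members. Both points reduce to the transversality of $H$ and $A$ together with the fact that $f$ vanishes on $C_H \ni O$ but not on $W \cap V$ for $W \neq \overline{C_H}, \bP(V)$; once made explicit, the Ehresmann argument on the non-compact smooth piece is routine, and the rest of the proof is essentially a tracking of which sets get identified under the $\C^*$-action and the pencil trivialization.
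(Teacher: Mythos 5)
Your proposal is correct and follows essentially the same route as the paper: the $\C^*$-action identifies the Milnor fibers with the global affine fibers, these are the sets $(W\cap\overline{C_A})\setminus B$ for members $W$ of the pencil, and since $\overline{C_H}$ is the only member meeting $\overline{C_A}$ non-transversally, the pair $(W\cap\overline{C_A},B)$ is diffeomorphic to $(A,B)$. Your explicit discussion of the Ehresmann step merely fills in details the paper leaves implicit.
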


The previous method may be applied to construct smoothings of 
germs of affine cones $C_B$ at their vertices. 
In order to apply it, one has therefore 
to find another subvariety $A$ of the same projective space, containing $B$, and such that 
$B$ is a section of $A$ by a hyperplane intersecting it transversally. In general, this is 
a difficult problem. 

The important point to be understood here 
is that, \emph{even if $(C_A, O)$ is normal, this is not necessarily the case for its 
hyperplane section $(C_B, O)$}. More generally, if $(Y,y)$ is a \emph{normal} 
isolated singularity and $f:(Y, y) \to (\C, 0)$ is a holomorphic function such that 
the germ $(f^{-1}(0), y)$ is reduced and with isolated singularity, it is not necessarily normal. 
In dimension $3$, in which we are especially interested in here, 
something special happens:

\begin{proposition} \label{3fold}
   Assume that $(Y,y)$ is a normal germ of $3$-fold, with isolated singularity, 
   and that $(f^{-1}(0), y)$ has also an isolated singularity. Then 
   $(f^{-1}(0), y)$ is normal if and only if $(Y,y)$ is Cohen-Macaulay. 
\end{proposition}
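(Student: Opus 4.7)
The plan is to reduce normality of the hyperplane section to a depth computation via Serre's criterion $(R_1)+(S_2)$, and then to use the behavior of depth under cutting by a non-zero-divisor.

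First I would observe that $f$ is a non-zero-divisor in $\mathcal{O}_{Y,y}$. Indeed, since $(Y,y)$ is normal it is irreducible, hence $\mathcal{O}_{Y,y}$ is an integral domain, and the hypothesis that $(f^{-1}(0),y)$ is a proper subgerm of dimension $2$ forces $f\neq 0$ in $\mathcal{O}_{Y,y}$. Consequently, the standard depth-lemma gives
\[
\operatorname{depth}\mathcal{O}_{f^{-1}(0),y} \;=\; \operatorname{depth}\mathcal{O}_{Y,y} - 1.
\]

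Next I would unpack what normality of $(f^{-1}(0),y)$ means in the present situation. By Serre's criterion, a reduced analytic germ is normal if and only if it satisfies $(R_1)$ and $(S_2)$. The hypothesis that $(f^{-1}(0),y)$ has isolated singularity means its singular locus has codimension $2$ in a $2$-dimensional germ, so $(R_1)$ holds automatically. Thus $(f^{-1}(0),y)$ is normal if and only if it satisfies $(S_2)$; but for a $2$-dimensional local ring $(S_2)$ is precisely the Cohen--Macaulay property, i.e. $\operatorname{depth}\mathcal{O}_{f^{-1}(0),y}=2$.

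Now I would combine these two observations. Since $(Y,y)$ is a normal $3$-dimensional germ, again by Serre one has $\operatorname{depth}\mathcal{O}_{Y,y}\geq 2$, and $(Y,y)$ is Cohen--Macaulay if and only if this depth equals $3$. Inserting this into the displayed equation, we get $\operatorname{depth}\mathcal{O}_{f^{-1}(0),y}=2$ exactly when $\operatorname{depth}\mathcal{O}_{Y,y}=3$, i.e., exactly when $(Y,y)$ is Cohen--Macaulay. Combined with the previous paragraph, this proves the equivalence.

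I do not expect a serious obstacle: the whole argument is a mechanical application of Serre's criterion together with the depth-drop formula for a non-zero-divisor. The only subtle points worth spelling out carefully are that $f$ really is a non-zero-divisor (which uses normality, hence irreducibility, of $(Y,y)$) and that $(R_1)$ for the hyperplane section is given for free by the isolated-singularity assumption; both are essential because the statement would fail without them.
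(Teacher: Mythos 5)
Your proof is correct, and at bottom it travels the same road as the paper: everything is funneled through the Cohen--Macaulay property, using that a normal isolated surface singularity is the same thing as a Cohen--Macaulay one. The difference is in the packaging. The paper quotes two facts as black boxes --- that Cohen--Macaulayness passes to a hyperplane section by a non-zero-divisor, and that an isolated surface singularity is normal if and only if it is Cohen--Macaulay --- and for the converse direction merely says that Cohen--Macaulayness of the section gives that of $(Y,y)$ ``by definition''. You open the boxes instead: Serre's criterion $(R_1)+(S_2)$ reduces normality of the two-dimensional section to $\operatorname{depth}=2$ (with $(R_1)$ free from the isolated-singularity hypothesis), and the exact formula $\operatorname{depth}\mathcal{O}_{f^{-1}(0),y}=\operatorname{depth}\mathcal{O}_{Y,y}-1$ for the non-zero-divisor $f$ gives both implications at once. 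This buys a cleaner, genuinely two-way argument --- in particular your treatment of the converse is sharper than the paper's --- at the cost of invoking Serre's criterion, which the paper deliberately avoids in favor of the more intuitive ``iterated hyperplane section'' description of Cohen--Macaulayness. The one point you pass over silently is that Serre's criterion requires the section to be reduced; this is part of the paper's standing hypotheses (and in any case follows from $\operatorname{depth}\mathcal{O}_{Y,y}\ge 2$, which yields $(S_1)$, hence reducedness together with $(R_0)$), so it is not a gap, but it deserves a word.
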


\begin{proof}
       Let us explain first basic intuitions about \emph{Cohen-Macaulay germs}. 
      This notion appears naturally if one 
      studies singularities using successive hyperplane sections. 
      Intrinsically speaking, a hyperplane section of a germ $(Y,y)$ is defined as the zero-locus 
      of a function $f \in \mathfrak{m}$, where $\mathfrak{m}$ is the maximal 
      ideal of the local ring $\mathcal{O}$ of the germ, endowed with the analytic 
      structure given by the quotient local ring $\mathcal{O} / (f)$. This section is of dimension 
      at least $\dim(Y,y) -1$. Dimension drops necessarily if $f$ \emph{is not a 
      divisor of $0$} in $\mathcal{O}$. Do such functions exist? Not necessarily. 
      But if they exist, we take the hyperplane section and we repeat the process. 
      $(Y,y)$ is called \emph{Cohen-Macaulay} if it is possible to drop in this way 
      iteratively the dimension till arriving at an analytical space of dimension $0$ 
      (that is, set-theoretically, at the point $y$). 
      
      For the basic properties of the previous notion, 
      one may consult \cite{GPR 94} or \cite{G 78}.  Here we will need only the following facts: 
        \begin{enumerate}
           \item  \label{CMsect}
      If a germ is Cohen-Macaulay, then for any 
      $f \in \mathfrak{m}$ non-dividing $0$, the associated 
      hyperplane section $(f^{-1}(0), y)$ is also  Cohen-Macaulay. 
            \item   \label{normsurf}
      An isolated surface singularity is normal if and only 
      if it is Cohen-Macaulay. 
           \end{enumerate}
           
           Assume now that $(Y,y)$ satisfies the hypothesis of the proposition.
           
           \begin{itemize}
                  \item If  $(Y,y)$ is Cohen-Macaulay and if the hyperplane 
         section $(f^{-1}(0),y)$  has an isolated singularity, property (\ref{CMsect}) 
         implies that $(f^{-1}(0),y)$ is also Cohen-Macaulay. Property 
         (\ref{normsurf}) implies then that it is normal. 
         
                  \item Conversely, if $(f^{-1}(0),y)$ is normal, then it is Cohen-Macaulay by 
         property (\ref{normsurf}), which implies by definition that $(Y,y)$ is also 
         Cohen-Macaulay.         
            \end{itemize}
   \end{proof}

Let us come back to the smooth projective varieties $B \subset A \subset \bP(V)$. 
The cone $C_B$ is therefore not necessarily normal, even if 
$C_A$ is. But its normalization is easy to describe:

\begin{proposition}  \label{conorm}
    The normalization of $C_B$ is the algebraic variety 
    obtained by contracting the zero-section of the total space 
    of the line bundle 
    $\mathcal{O}(-1)|_{B}$, which is isomorphic to the 
    conormal line bundle of $B$ in $A$.
\end{proposition}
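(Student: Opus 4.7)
Let $\mathcal{L}:=\mathcal{O}_{\bP(V)}(-1)|_B$ and denote by $E$ the total space of $\mathcal{L}$. Tautologically, $E$ sits inside $B\times V$ as the set of pairs $(b,v)$ such that $v$ lies on the line $\ell_b\subset V$ represented by $b$. The plan is to construct a natural proper birational morphism $\phi\colon E\to C_B$, contract the zero-section of $E$ to obtain a finite bijection $\bar\phi\colon\bar E\to C_B$, verify that $\bar E$ is normal, and finally identify $\mathcal{L}$ with the conormal bundle of $B$ in $A$.

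First I would let $\phi\colon E\to V$ be the restriction of the second projection $B\times V\to V$. Since every line $\ell_b$ with $b\in B$ lies in $C_B$, the image of $\phi$ is contained in $C_B$, and the restriction $\phi\colon E\setminus Z\to C_B\setminus\{O\}$ (where $Z$ denotes the zero-section) is an isomorphism with inverse $v\mapsto ([v],v)$. Because $B$ is projective, the projection $B\times V\to V$ is proper, and hence so is $\phi$. In particular, $\phi$ is surjective onto $C_B$ and contracts $Z$ to the vertex $O$.

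Next I would form the contraction $c\colon E\to\bar E$ of the compact subvariety $Z$ to a single point $\bar 0$, which exists as a complex-analytic variety since $Z$ is projective and has a neighborhood admitting a contraction (the Grauert criterion is immediate here because $\mathcal{L}^{-1}=\mathcal{O}_B(1)$ is ample, so $Z$ has a strongly pseudoconvex neighborhood in $E$). The map $\phi$ factors as $\bar\phi\circ c$ with $\bar\phi\colon\bar E\to C_B$ proper and bijective, hence finite. The map $\bar\phi$ is an isomorphism away from $O$, where $\bar E$ is smooth, so it remains only to check normality of $\bar E$ at $\bar 0$.

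The key step, which I expect to be the main obstacle, is this normality statement. I would prove it via the ring of regular functions: because $\mathcal{L}^{-1}$ is ample, the ring
\[
R \;=\; \bigoplus_{n\ge 0} H^0\bigl(B,\mathcal{L}^{-n}\bigr) \;=\; \bigoplus_{n\ge 0} H^0\bigl(B,\mathcal{O}_B(n)\bigr)
\]
is finitely generated, and $\bar E$ is canonically identified with $\mathrm{Spec}(R)$, with $\bar 0$ corresponding to the irrelevant maximal ideal. To show $R$ is integrally closed, take any $f\in\mathrm{Frac}(R)$ integral over $R$. Since the grading induces a grading on the integral closure, I may assume $f$ is homogeneous, say of degree $n$; then $f$ is a rational section of $\mathcal{O}_B(n)$, and integrality over $R$ forces its polar divisor to be empty, by the valuative criterion applied at each codimension-one point of the normal (in fact smooth) variety $B$. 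Therefore $f\in H^0(B,\mathcal{O}_B(n))\subset R$. So $R$ is normal, $\bar E$ is normal, and $\bar\phi$ is a finite birational map from a normal variety onto $C_B$ that is an isomorphism outside the non-normal locus; by the characterization of normalization (Proposition \ref{charnorm}), $\bar\phi$ is the normalization of $C_B$.

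Finally I would identify $\mathcal{L}$ with the conormal bundle of $B$ in $A$. Since $B=H\cap A$ is the transverse intersection of $A$ with the hyperplane $H\subset\bP(V)$, the normal bundle of $B$ in $A$ is $\mathcal{O}_{\bP(V)}(H)|_B=\mathcal{O}_{\bP(V)}(1)|_B$; dualizing gives $\mathcal{O}_{\bP(V)}(-1)|_B=\mathcal{L}$, as required.
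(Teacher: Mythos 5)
Your proof is correct, and its skeleton is the same as the paper's: realize the total space of $\mathcal{O}(-1)|_B$ tautologically inside $B\times V$, note that the projection to $V$ maps it onto $C_B$ and is an isomorphism off the zero-section, contract the zero-section, and conclude via the characterization of the normalization (Proposition \ref{charnorm}) once the contracted space is known to be normal. The one genuine divergence is in how that normality is established. The paper simply observes that, by the very definition of a contraction in the analytic category (see \cite{GPR 94}), the contracted space $\tilde{C}_B$ is normal, and then uses Corollary \ref{normext} to produce the map to $C_B$. You instead identify the contraction with $\mathrm{Spec}$ of the section ring $R=\bigoplus_{n\ge 0}H^0(B,\mathcal{O}_B(n))$ and prove directly that $R$ is integrally closed, using the gradedness of the integral closure and a valuation argument at codimension-one points of the smooth variety $B$. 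This buys a self-contained algebraic proof of the key step (and recovers the classical fact that $\mathrm{Spec}(R)$ is the normalization of the affine cone over a smooth projectively embedded variety), at the price of having to justify the identification $\bar E\simeq\mathrm{Spec}(R)$, which you assert as canonical; the paper's route is shorter because it delegates normality to the construction of the contraction itself. The identification of $\mathcal{O}(-1)|_B$ with the conormal bundle is the same in both arguments: transversality of $B=H\cap A$ gives $N_{B/A}\simeq\mathcal{O}(1)|_B$, equivalently $B$ is the vanishing locus of a section of $\mathcal{O}(1)|_A$.
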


\begin{proof}
   The isomorphism of the two line bundles follows from the fact 
   that $B$ is the vanishing locus of a section of $\mathcal{O}(1)|_{A}$. 
   Here, as is standard in algebraic geometry, $\mathcal{O}(-1)$ 
   denotes the dual of the tautological line bundle 
   on $\bP(V)$. Its fiber above a point of $\bP(V)$ is the associated line. 
   
   Denote by $\tilde{C}_B$ the space obtained by contracting the zero-section of 
   $\mathcal{O}(-1)|_{B}$, and by $\tilde{O} \in \tilde{C}_B$ the image of the $0$-section. 
   By the definition of contractions, $\tilde{C}_B$ is normal (see \cite{GPR 94}). 
   As the fiber of $\mathcal{O}(-1)|_{B}$ over a point $b \in B \hookrightarrow \bP(V)$ is the line of 
   $V$ whose projectivisation is $b$, we see that there is a morphism: 
      \[  \nu : \tilde{C}_B  \to C_B \]
   which induces an isomorphism $\tilde{C}_B \setminus \tilde{O} \simeq C_B \setminus O$. 
   As $\tilde{C}_B$ is normal, by Corollary \ref{normext} and Proposition \ref{charnorm} 
   we see that $\nu$ is a normalization morphism. 
\end{proof}

\bigskip
\section{Isolated singularities with simple elliptic normalization}
\label{sec:sell}

In this section  we apply the method of sweeping out the cone with
hyperplane sections in order to show that the total space of the minimal resolution 
of any non-smoothable simple elliptic surface singularity is diffeomorphic to 
the Milnor fiber of some non-normal isolated surface singularity with simple 
elliptic normalization. We recall first several known properties of ruled surfaces over 
elliptic curves, following Hartshorne's presentation done in \cite[Chapter V.2]{H 77}.  
We conclude with a generalization valid for any 
normal surface singularity, using results of Laufer and Bogomolov \& de Oliveira. 
\medskip

In order to apply the method of the previous section to singularities with 
simple elliptic normalization, we want to find surfaces embedded in some 
projective space which admit a transversal hyperplane section which is 
an elliptic curve. Moreover, because of Propositions \ref{conorm} and 
\ref{sellsmooth}, we would like to get an elliptic curve whose self-intersection 
number in the surface is $\geq 10$. As a consequence of the following theorem of Hartshorne 
\cite{H 69}, this forces us to take a \emph{ruled surface}:

\begin{theorem}
   Let $C$ be a smooth compact curve of genus $g$ on a smooth 
   compact complex algebraic surface $S$. If $S \setminus C$ 
   is minimal (that is, it does not contain smooth rational curves of 
   self-intersection $(-1)$) and $C^2 \geq 4g +6$, then $S$ is 
   a ruled surface and $C$ is a section of the ruling. 
\end{theorem}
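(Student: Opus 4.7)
The plan is to use adjunction to force strong negativity of $K_S$ on $C$, invoke the Enriques--Kodaira classification to conclude $S$ is birationally ruled, and then exploit the minimality hypothesis on $S \setminus C$ to upgrade the birational ruling to a geometric ruling on $S$ itself.

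First, adjunction for $C \subset S$ gives $2g - 2 = C^2 + K_S \cdot C$, so $C^2 \geq 4g+6$ forces
\begin{equation*}
  K_S \cdot C \leq -2g - 8 < 0.
\end{equation*}
I would then prove $\kappa(S) = -\infty$. If some $|m K_S|$ contained an effective divisor $D$, then $D \cdot C = m\, K_S \cdot C < 0$ would force $C \subseteq \mathrm{Supp}(D)$; writing $D = aC + D'$ with $a \geq 1$ and $C \not\subseteq \mathrm{Supp}(D')$, the inequality $D' \cdot C \geq 0$ would give $m\, K_S \cdot C \geq a\, C^2 > 0$, a contradiction. By the Enriques--Kodaira classification, $S$ is therefore birationally ruled.

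Let $\sigma : S \to S_0$ be a morphism to a minimal model; $S_0$ is either $\bP^2$ or a geometric $\bP^1$-bundle $\bP(\mathcal{E}) \to B$ over a smooth curve. The case $S_0 = \bP^2$ is incompatible with the hypotheses: a plane curve of genus $g = \binom{d-1}{2}$ satisfying $d^2 \geq 4g + 6$ admits only a handful of small-degree possibilities, each excluded by the minimality of $S \setminus C$. Assume then $S_0 = \bP(\mathcal{E}) \to B$, and let $C_0 = \sigma_*(C)$ and $F$ denote a fiber. The induced map $C_0 \to B$ has degree $C_0 \cdot F$; a Riemann--Hurwitz argument using $C_0^2 \geq 4g+6$ and adjunction on $S_0$ forces this degree to equal $1$, so $C_0$ is a section of the ruling.

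The main obstacle is showing that $\sigma$ is an isomorphism, so $S$ itself is a geometric $\bP^1$-bundle with $C$ a section. I argue as follows: if $\sigma$ is non-trivial, it factors through the blowup of some point $p \in S_0$, producing a $(-1)$-curve $E$ on that blowup. If $p \notin C_0$, then $E$ is disjoint from the strict transform of $C$ and persists as a $(-1)$-curve in $S \setminus C$, contradicting minimality. If $p \in C_0$, then since $C_0$ meets the fiber $F_p$ through $p$ transversally in a single point, the strict transform of $F_p$ is a $(-1)$-curve disjoint from the strict transform of $C$, again contradicting minimality. The technical subtlety in this last step, and what I expect to be the hardest part of the proof, is carefully tracking these $(-1)$-curves through any further blowups in the factorization of $\sigma$ to confirm they survive all the way up to $S$; once this is done, no such $\sigma$ can exist, $S = S_0$, and the conclusion follows.
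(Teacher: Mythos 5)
First, a remark on the comparison you were asked for: the paper does not prove this statement at all --- it is quoted from Hartshorne's article \cite{H 69} --- so there is no internal proof to measure your argument against, and I judge it on its own terms. Your opening is correct and clean: adjunction gives $K_S\cdot C\le -2g-8$, your argument that no pluricanonical system can be effective is right, so $\kappa(S)=-\infty$ and $S$ is birationally ruled.

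The first genuine gap is the claim that Riemann--Hurwitz and adjunction force $\deg(C_0\to B)=1$. This is \emph{false} for an arbitrary choice of minimal model: take $S=S_0=\bP^1\times\bP^1$ and $C$ a smooth curve of bidegree $(1,3)$. Then $g=0$, $C^2=6=4g+6$, and $S\setminus C$ is minimal since a quadric contains no $(-1)$-curves at all; yet $C$ meets the fibers of one of the two rulings in $3$ points. The theorem survives only because $C$ is a section of the \emph{other} ruling. What the numerology actually yields is $2g-2\ge\frac{k-1}{k}C^2+2k(b-1)$ where $k=C_0\cdot F$ and $b=g(B)$, whence either $k=1$, or $b=0$ and $(k-1)(k-3)\ge g(k-2)$; the latter exceptional multisection cases exist and must be eliminated by changing the ruling, not by the argument you describe. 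Moreover, even that inequality is not immediate from your sketch, because you compute on $C_0=\sigma_*C$, which may be badly singular, so $p_a(C_0)\ne g$ and $C_0^2\ne C^2$; one must control the multiplicities $m_i$ of $C_0$ at the blown-up and infinitely near points (the key point being $m_i\le C_0\cdot F_{p}=k$, so that the error term $\frac1k\sum m_i(k-m_i)$ is nonnegative). The same oversight undermines your treatment of the $\bP^2$ case, where you assume $\sigma_*C$ is a \emph{smooth} plane curve of genus $\binom{d-1}{2}$; in fact for $S=\bP^2$ itself one has $d^2<2d^2-6d+10$ for every $d$, so there are no examples at all, and any other rational surface also dominates a Hirzebruch surface, which is the correct way to dispose of this case.

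The second genuine gap is the endgame, and it is not merely the ``technical subtlety'' you flag. When $p\in C_0$ you assert that the strict transform of the fiber $F_p$ is a $(-1)$-curve disjoint from $C$; but $\sigma$ may blow up several (possibly infinitely near) points on $F_p$, so that strict transform typically has self-intersection $<-1$ and may still meet $C$. Tracking $(-1)$-curves one at a time through further blowups cannot work, precisely because strict transforms of $(-1)$-curves cease to be $(-1)$-curves. The standard repair is to pass to the induced fibration $\pi=p\circ\sigma:S\to B$, of which $C$ is a section once $k=1$ is secured: by Zariski's lemma every component $G_i$ of a reducible fiber $G=\sum n_iG_i$ has $G_i^2<0$, and $G\cdot K_S=-2$ forces the $(-1)$-curve components to satisfy $\sum n_i\ge 2$, while $C\cdot G=1$ means $C$ meets a single component, with coefficient $1$; hence every reducible fiber contains a $(-1)$-curve missing $C$, contradicting minimality of $S\setminus C$. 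So all fibers are irreducible and reduced, $S$ is geometrically ruled and $C$ is a section. Your overall strategy (classification, minimal model, fiber analysis) can be completed along these lines, but as written the two central steps --- degree one, and triviality of $\sigma$ --- do not go through.
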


Ruled surfaces are those swept by lines (smooth rational curves):

\begin{definition} \label{defruled}
  A {\bf ruled surface} above a smooth projective 
  curve $C$ is a smooth projective surface $X$ together with a 
  surjective morphism $\pi : X \to C$, such that 
  all (scheme-theoretic) fibers are isomorphic to $\bP^1$.
\end{definition}

It is a theorem that all ruled surfaces admit regular sections. 

The following theorem is basic for the classification of ruled 
surfaces (see \cite[Prop. V.2.8, V.2.9]{H 77}):

\begin{theorem} \label{specsect}
    If $\pi: X \to C$ is a ruled surface, it is possible to write 
    $X \simeq \bP(\mathcal{E}^*)$, where $\mathcal{E}$ is a plane bundle 
    on $C$ with the property that $H^0(\mathcal{E}) \neq 0$, but for all 
    line bundles $\mathcal{L}$ on $C$ with $\deg \mathcal{L} < 0$, 
    we have  $H^0(\mathcal{E} \otimes \mathcal{L}) = 0$. In this case the 
    integer $e = - \deg \mathcal{E}$ is an invariant of $X$. Furthermore, in this 
    case there is a section $\sigma_0 : C \to X$ with image $C_0$, such that 
    $\mathcal{O}_X(C_0) \simeq \mathcal{O}_X(1)$. One has $C_0^2 = - e$. 
\end{theorem}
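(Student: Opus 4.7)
The plan is to follow Hartshorne's classical treatment in Chapter V.2 and to carry out four steps: produce a projective-bundle representation of $X$, normalise it, verify the invariance of $e$, and exhibit the distinguished section $\sigma_0$. For rank-$2$ bundles one has $\mathcal{F}^* \cong \mathcal{F}\otimes(\det\mathcal{F})^{-1}$, so $\bP(\mathcal{F}) \cong \bP(\mathcal{F}^*)$ as projective bundles, which means the paper's $\bP(\mathcal{E}^*)$ may be identified with Hartshorne's $\bP(\mathcal{E})$ without loss and absorbs any ambiguity between conventions.

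For the representation, start from any regular section $\sigma$ of $\pi$ (whose existence is stated in the paper) and set $\mathcal{O}_X(1) := \mathcal{O}_X(\sigma(C))$, a line bundle restricting to $\mathcal{O}_{\bP^1}(1)$ on each fiber. Then $\mathcal{F} := \pi_*\mathcal{O}_X(1)$ is a rank-$2$ vector bundle on $C$, and the canonical surjection $\pi^*\mathcal{F} \twoheadrightarrow \mathcal{O}_X(1)$ identifies $X$ with $\bP(\mathcal{F})$ over $C$. To normalise, note that replacing $\mathcal{F}$ by $\mathcal{F}\otimes\mathcal{M}$ leaves the projective bundle unchanged; choose $\mathcal{M}_0$ of the smallest degree $d_0$ such that $H^0(\mathcal{F}\otimes\mathcal{M}_0) \neq 0$ (this is well defined since Riemann--Roch forces vanishing of $H^0(\mathcal{F}\otimes\mathcal{M})$ for $\deg \mathcal{M}$ sufficiently negative) and set $\mathcal{E} := \mathcal{F}\otimes\mathcal{M}_0$. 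By minimality, $H^0(\mathcal{E}\otimes\mathcal{L}) = 0$ whenever $\deg\mathcal{L} < 0$. For the invariance of $e$: if $\mathcal{E}_1, \mathcal{E}_2$ are both normalised and yield the same $X$, then $\mathcal{E}_2 \cong \mathcal{E}_1\otimes\mathcal{L}$ for some line bundle $\mathcal{L}$ on $C$, and either sign of $\deg \mathcal{L}$ contradicts the normalisation of one of the two bundles; hence $\deg\mathcal{L} = 0$.

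For the distinguished section, a nonzero $s \in H^0(\mathcal{E})$ yields a morphism $\mathcal{O}_C \to \mathcal{E}$. If $s$ vanished on a nonzero effective divisor $D$, it would factor as $\mathcal{O}_C \subset \mathcal{O}_C(D) \hookrightarrow \mathcal{E}$, producing a nonzero section of $\mathcal{E}(-D)$ and violating the normalisation. Hence $s$ realises $\mathcal{O}_C$ as a sub-line bundle of $\mathcal{E}$; the corresponding quotient $\mathcal{L}_0 := \mathcal{E}/\mathcal{O}_C$ defines a section $\sigma_0 : C \to X = \bP(\mathcal{E})$ with image $C_0$. The identification $\mathcal{O}_X(C_0) \cong \mathcal{O}_X(1)$ is obtained by writing $\mathcal{O}_X(C_0) = \mathcal{O}_X(1)\otimes\pi^*\mathcal{M}'$ (both sides restrict to $\mathcal{O}_{\bP^1}(1)$ on each fiber) and comparing the direct images: computing $\pi_*\mathcal{O}_X(C_0)$ via the exact sequence $0 \to \mathcal{O}_X \to \mathcal{O}_X(C_0) \to \mathcal{O}_{C_0}(C_0) \to 0$, and matching it with $\pi_*\mathcal{O}_X(1)\otimes\mathcal{M}' = \mathcal{E}\otimes\mathcal{M}'$ via the projection formula, forces $\mathcal{M}' \cong \mathcal{O}_C$. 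The self-intersection then falls out of the Grothendieck relation $c_1(\mathcal{O}_X(1))^2 = \pi^*c_1(\mathcal{E})\cdot c_1(\mathcal{O}_X(1))$ on the rank-$2$ projective bundle (the $c_2$-term vanishes over a curve), which evaluates to $C_0^2 = \deg\mathcal{E} = -e$.

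The principal technical subtlety is the normalisation argument and its use to upgrade a bare global section of $\mathcal{E}$ to a genuine sub-line bundle, together with the direct-image comparison that pins down $\mathcal{M}' \cong \mathcal{O}_C$ (not merely $\deg \mathcal{M}' = 0$); once these points are settled, the remainder reduces to standard manipulations of Chern classes and direct images on a $\bP^1$-bundle.
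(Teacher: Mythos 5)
The paper does not prove this theorem: it quotes it as known background, with the citation to Hartshorne [Prop.\ V.2.8, V.2.9], and your argument is essentially a faithful reconstruction of that proof (normalising $\pi_*\mathcal{O}_X(\sigma(C))$ by twisting, deducing the invariance of $e$ from the two-sided use of the normalisation property, upgrading a global section of $\mathcal{E}$ to a nowhere-vanishing one, and using the rank-two self-duality $\mathcal{F}^*\cong\mathcal{F}\otimes(\det\mathcal{F})^{-1}$ to reconcile the paper's convention with Hartshorne's). One justification is wrong as written: Riemann--Roch does not force $H^0(\mathcal{F}\otimes\mathcal{M})=0$ for $\deg\mathcal{M}$ very negative, since it only drives $\chi$ to $-\infty$ while $h^0=\chi+h^1$ and $h^1$ grows as well; the correct argument is that a nonzero section of $\mathcal{F}\otimes\mathcal{M}$ gives a nonzero map $\mathcal{M}^{-1}\to\mathcal{F}$, and the degrees of line subbundles of the fixed rank-two bundle $\mathcal{F}$ are bounded above (embed $\mathcal{F}$ in a direct sum of line bundles and project to a factor). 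Similarly, in the final step the determinant comparison should be carried out by computing the normal bundle of $C_0$ as $\sigma_0^*\mathcal{O}_X(C_0)\cong\mathcal{L}_0\otimes\mathcal{M}'$ (using $\sigma_0^*\mathcal{O}_X(1)\cong\mathcal{L}_0$) and equating $\mathcal{L}_0\otimes\mathcal{M}'$ with $\det(\mathcal{E}\otimes\mathcal{M}')\cong\mathcal{L}_0\otimes\mathcal{M}'^{\otimes 2}$; this is what actually yields $\mathcal{M}'\cong\mathcal{O}_C$, whereas the normalisation hypothesis alone would not exclude a nontrivial $\mathcal{M}'$ of degree zero. With these two repairs the proof is complete and coincides with the one the paper refers to.
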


In the sequel, we will say that $e$ is the \emph{numerical invariant} of 
the ruled surface. 

\begin{remark}Ê \label{dualproj}
In fact, Hartshorne writes $\bP(\mathcal{E})$ instead of $\bP(\mathcal{E}^*)$. 
The reason is that his definition of projectivisation is dual to the one 
we use in this paper: instead of taking the lines in a vector space or vector bundle, he takes 
the hyperplanes, that is, the lines in the dual vector space/bundle. 
\end{remark}

We want to find sections of ruled surfaces which appear as hyperplane sections 
for some embedding in a projective space, that is, according to a standard denomination 
of algebraic geometry, \emph{very ample} sections. 
The following proposition combines results contained in  
\cite[Theorems 2.12, 2.15, Exercice 2.12 of Chapter V]{H 77}: 

\begin{proposition} \label{vamp}
    Assume that $C$ is an elliptic curve and that $X$ 
    is a ruled surface above $C$ with numerical invariant $e$. Then: 
      \begin{enumerate}
            \item When $X$ varies for fixed $C$, the invariant $e$ takes all the values 
                in $\Z \cap [-1, \infty)$. 
                
            \item Consider a fixed such ruled surface and let $F$ be one of its fibers. 
                 Take $a \in \Z$. 
                Then the divisor $C_0 + a F$ is very ample on $X$ if and only if $a \geq e + 3$. 
      \end{enumerate}
\end{proposition}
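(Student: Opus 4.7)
The plan is to establish each part through a direct analysis of the normalized rank-$2$ bundle $\mathcal{E}$ on $C$ provided by Theorem \ref{specsect}, together with cohomological computations on $X = \bP(\mathcal{E}^*)$ specific to the elliptic base.

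For (1), I would first realize each value $e \geq -1$ by an explicit construction. For $e \geq 0$, take $\mathcal{E} = \mathcal{O}_C \oplus \mathcal{L}$ with $\deg \mathcal{L} = -e \leq 0$; normalization holds since any sub-line-bundle of positive degree would either project isomorphically onto the first summand or inject into $\mathcal{L}$, both impossible. For $e = -1$, take a non-split extension $0 \to \mathcal{O}_C \to \mathcal{E} \to \mathcal{L} \to 0$ with $\deg \mathcal{L} = 1$; such an extension exists because $\mathrm{Ext}^1(\mathcal{L}, \mathcal{O}_C) = H^1(\mathcal{L}^{-1})$ is one-dimensional by Serre duality (using $K_C = \mathcal{O}_C$), and in this case normalization is equivalent to non-splittness: any lift to $\mathcal{E}$ of a sub-line-bundle of positive degree of the quotient $\mathcal{L}$ would force a splitting. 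The lower bound $e \geq -1$ follows from Nagata's theorem on the maximal degree of a sub-line-bundle of a rank-$2$ bundle over a genus-$g$ curve: if $e \leq -2$, then $\deg \mathcal{L} \geq 2$, and some $\mathcal{L}(-p) \subset \mathcal{L}$ lifts to $\mathcal{E}$ (the obstruction, an element of $H^1(\mathcal{L}^{-1}(p))$, vanishes for a suitable $p$), producing a sub-line-bundle of positive degree and contradicting normalization.

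For (2), compute the intersection numbers $D \cdot C_0 = a - e$, $D \cdot F = 1$, $D^2 = 2a - e$ for $D = C_0 + aF$. The Nakai--Moishezon criterion already gives ampleness for $a \geq e + 1$, so the whole strength of the statement lies in the refinement to very ampleness. For that I would exploit the projection $\pi : X \to C$ and the Leray spectral sequence: very ampleness of $D$ is equivalent to $|D|$ separating any two (possibly infinitely near) points of $X$, and this reduces to the vanishing $H^1(X, \mathcal{O}_X(D - p - q)) = 0$ for all $p, q \in X$. Pushing forward along $\pi$ converts this into a vanishing statement for the cohomology on $C$ of bundles obtained by twisting symmetric powers of $\mathcal{E}$ by line bundles of controlled degree. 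Using Riemann--Roch and Serre duality on the elliptic curve, the required vanishing holds precisely when $a \geq e + 3$. For necessity, when $a \leq e + 2$ I would exhibit an explicit failure: for $a = e + 2$, a Riemann--Roch count shows $|D|$ has dimension too small to separate a pair of points lying on the same fiber of $\pi$; for $a < e + 2$, either the complete linear system acquires $C_0$ as a fixed component or $D$ ceases to be ample altogether.

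The main obstacle is the careful handling of the non-split case $e = -1$, in which $X$ is not a product $\bP^1 \times C$ and the push-forwards $\pi_*\mathcal{O}_X(D)$ are indecomposable Atiyah-type bundles whose cohomology must be tracked through the associated extension sequence rather than read off from a direct sum decomposition. In particular, the equality $a = e + 3 = 2$ sits right at the borderline of the vanishing argument and must be verified by hand using the Atiyah bundle's explicit cohomology.
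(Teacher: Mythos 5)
The first thing to note is that the paper offers no proof of Proposition \ref{vamp} at all: it is explicitly presented as a combination of results quoted from Hartshorne (\cite[Theorems V.2.12, V.2.15 and Exercise V.2.12]{H 77}), which in turn rest on Atiyah's classification of vector bundles on an elliptic curve. So you are attempting to supply an argument where the paper supplies a reference, which is legitimate, but your sketch must then be judged on its own merits, and it contains a genuine gap in part (1). Your proof of the lower bound $e\geq -1$ claims that if $\mathcal{E}$ is normalized with $\deg\mathcal{L}=-e\geq 2$ in the sequence $0\to\mathcal{O}_C\to\mathcal{E}\to\mathcal{L}\to 0$, then ``some $\mathcal{L}(-p)$ lifts to $\mathcal{E}$'' for a suitable point $p$. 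The obstruction to lifting is the image of the extension class $\xi\in H^1(\mathcal{L}^{-1})\simeq H^0(\mathcal{L})^*$ in $H^1(\mathcal{L}^{-1}(p))\simeq H^0(\mathcal{L}(-p))^*$, and it vanishes if and only if $H^0(\mathcal{L}(-p))\subseteq\ker\xi$. Since both spaces have dimension $\deg\mathcal{L}-1$, this forces $H^0(\mathcal{L}(-p))=\ker\xi$, i.e.\ $[\xi]$ must lie on the image of $C$ under the map $C\to\bP\bigl(H^0(\mathcal{L})^*\bigr)$ defined by $|\mathcal{L}|$. For $\deg\mathcal{L}\geq 3$ that image is a curve in a projective space of dimension at least $2$, so for a general extension class no such $p$ exists and your step fails. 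The statement you are after is still true, but the mechanism must be changed: either lift $\mathcal{L}(-D)$ for an effective divisor $D$ of degree $\deg\mathcal{L}-1$ (then $H^0(\mathcal{L}(-D))$ is a line, and every hyperplane $\ker\xi$ contains such a line), or use the classical argument that a normalized $\mathcal{E}$ of degree $\geq 2$ has $h^0(\mathcal{E})\geq 2$ and the wedge of two independent sections produces a section of $\mathcal{E}$ vanishing at a point, hence a sub-line-bundle of positive degree.

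Part (2) of your proposal is a plan rather than a proof. The entire content of the hard implication is the sentence ``Using Riemann--Roch and Serre duality on the elliptic curve, the required vanishing holds precisely when $a\geq e+3$,'' which is exactly what needs to be computed; moreover the criterion you invoke, $H^1(X,\mathcal{I}_{\{p,q\}}\otimes\mathcal{O}_X(D))=0$ (note that $\mathcal{O}_X(D-p-q)$ is not meaningful for points on a surface), is a \emph{sufficient} condition for very ampleness, not an equivalent one, so it cannot by itself deliver the ``only if'' direction, and your separate necessity argument for $a\leq e+2$ is likewise only asserted. You correctly identify the delicate case $e=-1$ (the indecomposable Atiyah bundle, Hartshorne's Exercise V.2.12) as requiring separate treatment, but you do not carry it out. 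In short: the architecture of your argument is the standard one and part (1) is repairable as indicated, but as written the proposal neither closes the lower bound in (1) nor establishes either direction of (2); if you do not intend to reprove Atiyah's classification and Hartshorne's very-ampleness criteria in full, the honest course is to do what the paper does and cite them.
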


Fix now an integer $a \geq e +3$. By 
Proposition \ref{vamp}, the divisor $C_0 + a F$ is very ample. Denote by 
$X \hookrightarrow \bP(V)$ the associated projective embedding. Let $H$ 
be a hyperplane which intersects it transversally, and let $B := H \cap X$. 
Therefore $B$ is linearly equivalent to $C_0 + a F$ on $X$. We have the 
following intersection numbers on $X$:
\[  \left\{ \begin{array}{l}
    B \cdot F =    (C_0 + a F) \cdot F = C_0 \cdot F = 1 \\
    B^2 = (C_0 + a F)^2 = C_0^2 + 2 a C_0 \cdot F = -e + 2a. 
\end{array} \right. \]
We have used the facts that: 
    \begin{itemize}
        \item  $F$ is a fiber, which implies that $F^2=0$; 
         \item $C_0$ is a section, which implies that $C_0 \cdot F = 1$; 
         \item $C_0^2 = -e$, by Theorem \ref{specsect}. 
     \end{itemize}
The first equality above implies that $B$ is again a section of the ruled surface. 
The second equality shows that a tubular neighborhood of $B$ 
in $X$ is diffeomorphic to a disc bundle over $C$ with Euler number $-e + 2a$. 
As $B$ is a section of the ruling, such a disc bundle may be  chosen as a 
differentiable sub-bundle of the ruling. As the fibers of the ruling $\pi: X \to C$ are spheres, 
its complement is again a disc bundle, 
necessarily of opposite Euler number. Proposition \ref{complsect} shows then that:

\begin{proposition}
    The Milnor fiber of the smoothing $f|_{C_A} : (C_X, 0) \to (\C, 0)$ of the isolated 
    surface singularity $(C_B, 0)$ is diffeomorphic to the disc bundle over $C$ 
    with Euler number $e - 2a$. 
\end{proposition}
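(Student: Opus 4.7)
The plan is to concatenate the two pieces that the preceding paragraph has already assembled. First, Proposition \ref{complsect}, applied with $A = X$ and to the transversal hyperplane section $B \subset X$, immediately identifies the Milnor fiber of $f|_{C_X} : (C_X, 0) \to (\C, 0)$ with the affine variety $X \setminus B$. So the entire task reduces to recognizing $X \setminus B$ as a $D^2$-bundle over $C$ of Euler number $e - 2a$.

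To handle that, I would exploit the intersection numbers already computed. The equality $B \cdot F = 1$ shows that $B$ is a section of the ruling $\pi : X \to C$, so its normal bundle $N_{B/X}$ is a complex line bundle over $C \simeq B$, and the equality $B^2 = -e + 2a$ identifies its degree. A tubular neighborhood $N$ of $B$ in $X$ is then a closed $D^2$-bundle over $C$ of Euler number $-e + 2a$. Moreover, since $B$ meets each fiber $F \simeq \bP^1$ transversally in one point, one may arrange $N$ to sit fiberwise inside the $S^2$-bundle $\pi : X \to C$ as a sub-$D^2$-bundle.

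The complement $X \setminus \mathrm{int}(N)$ is then the fiberwise complement of an open disc in $S^2$, hence again a $D^2$-bundle over $C$, onto which $X \setminus B$ deformation retracts. The final step is to show this complementary disc bundle has Euler number $e - 2a$. I would do this via the standard observation that if a closed oriented $S^2$-bundle is written as a union of two oriented $D^2$-bundles along their common $S^1$-boundary bundle with coherent overall orientation, then the two induced boundary orientations are opposite, forcing the two disc bundles to carry opposite Euler numbers. Applied to $X = N \cup (X \setminus \mathrm{int}(N))$, this gives Euler number $-(-e + 2a) = e - 2a$ for the complement, as required.

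The only delicate point is the orientation bookkeeping behind this opposite-Euler-numbers assertion, and I do not view it as a genuine obstacle. As a sanity check one may proceed algebraically: writing $X = \bP(\mathcal{E})$ with $B = \bP(L)$ for a line subbundle $L \subset \mathcal{E}$, a fiberwise linear-algebra computation identifies $X \setminus B$ with the total space of the line bundle $L \otimes (\mathcal{E}/L)^*$, whose degree is $\deg L - \deg(\mathcal{E}/L) = -B^2 = e - 2a$, confirming the Euler number.
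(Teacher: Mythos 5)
Your argument is correct and follows essentially the same route as the paper: identify the Milnor fiber with $X \setminus B$ via Proposition \ref{complsect}, use $B\cdot F = 1$ and $B^2 = -e+2a$ to realize a tubular neighborhood of $B$ as a sub-disc-bundle of the ruling, and conclude that the complement is the disc bundle of opposite Euler number. The extra algebraic check identifying $X\setminus B$ with the total space of $L\otimes(\mathcal{E}/L)^*$ is a nice confirmation not present in the paper, but the core reasoning is identical.
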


\begin{remark} \label{nonorm}
   This shows that the first Betti number of the Milnor fiber of this smoothing 
   is $2$. Greuel and Steenbrink's theorem \ref{vanish} implies 
   that the surface singularity $(C_B, 0)$ which is being smoothed 
   is non-normal. 
\end{remark}

By Proposition \ref{vamp}, we see that the integer $e -2 a$ takes any 
value in $\Z \cap (- \infty, -5]$ (because for fixed $e$, it takes all the integral values in 
$(- \infty, -e-6]$ which have the same parity as $-e-6$). Therefore: 

\begin{itemize}
  \item  this construction applies to  
simple elliptic singularities whose minimal resolution has 
an exceptional divisor with self-intersection any number in $\Z \cap (- \infty, -5]$;

  \item the Milnor fiber is diffeomorphic to the minimal resolution, both being 
     diffeomorphic to the disc bundle over $C$ with Euler number $e - 2a$.   
\end{itemize}

More generally, as an easy consequence of results of Laufer \cite{L 81} 
and Bogomolov and de Oliveira \cite{BO 97}, we have:

\begin{proposition}  \label{milnres}
    Let $(S,0)$ be any normal surface singularity. Then there exists 
    an isolated surface singularity with normalization isomorphic to 
    $(S,0)$, which has a smoothing whose Milnor fibers are diffeomorphic 
    to the minimal resolution of $(S,0)$. 
\end{proposition}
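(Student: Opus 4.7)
The plan is to produce the desired non-normal isolated surface singularity as the central fibre of a one-parameter flat family of complex surfaces, obtained by contracting the exceptional divisor of the minimal resolution $\pi : (\tilde S, E) \to (S, 0)$ along a Bogomolov--de Oliveira deformation of $\tilde S$ to Stein surfaces. This generalises the construction of Section \ref{sec:sell}, but goes through deformation theory rather than through the cone construction of Section \ref{sec:sweepcone}.

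Concretely, I would first invoke the theorem of Bogomolov and de Oliveira \cite{BO 97}, according to which the strongly pseudoconvex complex surface $\tilde S$ admits arbitrarily small complex deformations to Stein surfaces. This provides a smooth analytic family $\mathcal{Y} \to (\Delta, 0)$ on a fixed underlying real manifold, with central fibre (a neighbourhood of $E$ in) $\tilde S$ and generic fibre $\mathcal{Y}_t$ Stein, and in particular diffeomorphic to $\tilde S$. Laufer's theorem \cite{L 81} then allows one to contract $E \subset \mathcal{Y}_0$ relatively over $\Delta$, producing a flat analytic family $\mathcal{X} \to (\Delta, 0)$ together with a proper morphism $\mathcal{Y} \to \mathcal{X}$ over $\Delta$ which collapses $E$ to a single point $x_0 \in \mathcal{X}_0$ and is an isomorphism elsewhere. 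The central fibre $(\mathcal{X}_0, x_0)$ is then a reduced isolated surface singularity, and the induced finite birational morphism $\tilde S = \mathcal{Y}_0 \to \mathcal{X}_0$ identifies $(S, 0) = \tilde S / E$ with the normalisation of $(\mathcal X_0, x_0)$: it is finite and birational, an isomorphism off $\{x_0\}$, with $\tilde S$ normal. Since the generic fibres $\mathcal{X}_t \cong \mathcal Y_t$ are smooth, the family $\mathcal X \to (\Delta, 0)$ is a smoothing of $(\mathcal{X}_0, x_0)$, whose Milnor fibre is diffeomorphic to $\tilde S$, as required.

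The main obstacle, and the content of the cited papers, is the rigorous construction of the relative contraction: one must show that the naive set-theoretic contraction of the curve $E$ in the $3$-fold total space $\mathcal{Y}$ can be promoted to a \emph{flat} family $\mathcal X \to \Delta$, with a reduced isolated singularity (and no embedded components) at the central fibre. This is precisely the deformation-theoretic content of Laufer's work. Note finally that whenever $b_1(\tilde S) > 0$ --- for example whenever $E$ has non-rational components or cycles in its dual graph --- Greuel--Steenbrink's Theorem \ref{vanish} forces $(\mathcal{X}_0, x_0)$ to be genuinely non-normal: if it were normal, it would coincide with $(S, 0)$ and $\mathcal X \to \Delta$ would be a smoothing of $(S, 0)$ whose Milnor fibre has non-vanishing first Betti number, contradicting Theorem \ref{vanish}.
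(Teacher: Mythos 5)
Your proposal follows essentially the same route as the paper: deform the minimal resolution to Stein surfaces via Laufer and Bogomolov--de Oliveira, contract $E$ inside the three-dimensional total space of the deformation, and descend the family to obtain a smoothing of a (generally non-normal) isolated singularity whose normalization is $(S,0)$, with Milnor fibers diffeomorphic to the resolution by Ehresmann. The only cosmetic difference is that the paper carries out your ``relative contraction'' concretely as the Remmert reduction of the total space, using the normality of the resulting $3$-fold to descend the projection to the disc, rather than attributing that step to Laufer.
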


\begin{proof}
  Choose a Milnor representative of $(S,0)$ (see Definition \ref{miln}). 
  Therefore its boundary is strongly pseudo-convex. Take the minimal 
  resolution $\pi : (\Sigma, E)  \to (S,0)$. As $\pi$ is an isomorphism 
  outside $0$, the boundary of $\Sigma$ is also strongly pseudo-convex. 
  By the extensions done in 
  \cite{BO 97} of Laufer's results of \cite{L 81}, there exists a $1$-parameter 
  deformation: 
  $$\psi: (\tilde{\Sigma}, \Sigma) \to (\mathbb{D}_{\epsilon}, 0)$$ 
  of $\Sigma$ over a disc $\mathbb{D}_{\epsilon}$ of radius $\epsilon > 0$, 
  such that the fibers $\Sigma_t$ of $\psi$ above any point 
  $t \in \mathbb{D}_{\epsilon} \setminus 0$ do not 
  contain compact curves. If we choose the disc $\mathbb{D}_{\epsilon}$ small enough, 
  the boundaries of those fibers are also strongly pseudoconvex, 
  by the stability of this property. Therefore, the fibers of $\psi$ above 
  $\mathbb{D}_{\epsilon} \setminus 0$ are all Stein. 
  
  Consider now the \emph{Remmert reduction} (see \cite[Page 229]{GPR 94}): 
    \[ \rho: \tilde{\Sigma} \to \tilde{S}. \]
  By definition, it contracts all the 
  maximal connected compact analytic subspaces of $\tilde{\Sigma}$ to points, 
  and it is normal. 
  The only compact curve of $\tilde{\Sigma}$ is $E$, therefore $\rho$ contracts 
  $E$ to a point $P$, $\tilde{S}$ is a normal $3$-fold and $\rho$ is an 
  isomorphism above $\tilde{S} \setminus P$. As $\tilde{S}$ is normal, Corollary 
  \ref{normext}     shows that the map $\psi$ 
  descends to it, giving us a family:
      $$\psi' : (\tilde{S}, S') \to (\mathbb{D}_{\epsilon}, 0).  $$
  Here $S'$ denotes the fiber of $\psi'$ above the origin. 
  The map $\rho$ being an isomorphism in restriction to $\tilde{\Sigma} \setminus E$, 
  it gives an isomorphism:
    $$ \Sigma \setminus E \simeq S' \setminus P.$$
 Composing it with the isomorphism $\pi^{-1}: S \setminus 0 \to \Sigma \setminus E$, 
 we get an isomorphism $S \setminus 0 \simeq S' \setminus P$ which extends 
 by continuity to $S$. As $S$ is normal, we see that $(S, 0)$ is indeed the normalization of 
 $(S', P)$. 
 
 The map $\psi'$ gives therefore a smoothing with the desired properties:
    \begin{itemize}
         \item Its central fiber $(S', P)$ has normalization isomorphic to $(S,0)$. 
         
         \item Its Milnor fibers are diffeomorphic to the total space of the minimal 
            resolution of $(S,0)$. Indeed, by construction they are isomorphic 
            to the fibers of $\psi$. But $\psi$ is a deformations of a smooth surface, 
            therefore, by Ehresmann's theorem, \emph{all} its fibers are diffeomorphic, 
            and the central fiber is the minimal resolution $\Sigma$ of $S$. 
    \end{itemize}
\end{proof}

Compared with the general result \ref{milnres},  
the advantage of the construction explained before for simple elliptic singularities, 
using the method of sweeping a cone with hyperplane sections, is that it shows  
that in that case the minimal resolution is diffeomorphic to an affine algebraic surface. 

\begin{remark}
   Laufer proved that one can find a $1$-parameter deformation of the total space 
   of the minimal resolution which destroys \emph{any} irreducible component of 
   the exceptional divisor. As for simple elliptic singularities the exceptional divisor 
   is irreducible, we could use his result and proceed as in the previous proof, 
   in order to get the proposition for this special class of singularities.
\end{remark}

\begin{remark}  \label{Kollar}
   After seeing the first version of this paper put on ArXiv, J\'anos Koll\'ar communicated 
   me some information I did not know about papers dealing at least partially with the 
   smoothability of non-normal singularities. The oldest paper he may think about 
   dealing with this problem is \cite[Section 4]{M 78}. There Mumford gives examples 
   of smoothable non-normal isolated surface singularities with simple elliptic normalizations. 
   Extending a result of Mumford's paper, Koll\'ar proved in \cite[Lemma 14.2]{K 95} that 
   all smoothings of isolated surface singularities with rational normalization lift to 
   smoothings of the normalization. This shows that for rational surface singularities, 
   one cannot obtain new Milnor fibers by the method of the present paper. In higher 
   dimensions, Koll\'ar proved in \cite[Theorem 3 (2)]{K 95} that,  if  $X_0$ is a non-normal 
   isolated singularity of dimension at least $3$ whose normalization is log canonical, 
   then $X_0$  is not smoothable: it does not even have normal deformations. He also 
   indicated me \cite[Section 3.1]{Kbook} as a reference for basic material about singularities 
   of cones.
 \end{remark}

\bigskip
\section{Open questions}
\label{sec:opquest}

The following questions are basic for the understanding of the topology 
of the Milnor fibers of isolated, not necessarily normal surface singularities:

\begin{enumerate}
    \item  Given a Milnor fillable contact $3$-manifold, determine whether, 
       up to diffeomorphisms/homeomor\-phisms relative to the boundary,  
       there is always a finite number of Milnor fibers corresponding 
       to smoothings of not-necessarily normal isolated surface 
       singularities filling it.

    \item Given a Milnor fillable contact $3$-manifold $(M, \xi)$, determine whether there exists 
       an isolated surface singularity which fills it, such that its Milnor fibers 
       exhaust, up to diffeomorphisms/homeo\-mor\-phisms, the Milnor fibers of the various isolated 
       singularities which fill $(M, \xi)$.

      \item Given a Milnor fillable contact $3$-manifold $(M, \xi)$, determine whether there exists 
       an isolated surface singularity which fills it, such that its Milnor fibers 
       exhaust, up to diffeomorphisms/homeo\-mor\-phisms, the Stein fillings of $(M, \xi)$.
       
       \item Given a Milnor fillable contact $3$-manifold $(M, \xi)$, classify, 
       up to diffeomorphisms/homeomor\-phisms relative to the boundary, 
       the Milnor fibers of the isolated singularities filling it, and determine 
       the subset of those which appear as Milnor fibers of \emph{normal} 
       singularities.

     \item Determine bounds on the first Betti number of the Milnor fibers  
         of an isolated non-normal surface singularity in terms of its analytic invariants. 
    
\end{enumerate}

\begin{remark}
   For cyclic quotient singularities, Lisca \cite{L 08} proved that there is a finite number 
   of Stein fillings of their contact boundaries and he classified them up to 
   diffeomorphisms relative to the boundary. He conjectured that they are 
   diffeomorphic to the Milnor fibers of the corresponding singularity. 
   N\'emethi and the present author proved this conjecture in \cite{NP 10}. 
   Therefore, in this case the answers of the first three questions are positive and 
   the fourth question is also answered. It would be interesting to understand 
   if the fact that the first three questions have a positive answer is rather an 
   exception or the rule for rational surface singularities. 
\end{remark}

\medskip
\end{document}